\newtheorem{theorem}{Theorem}[section]
\newtheorem{lemma}[theorem]{Lemma}
\newtheorem{proposition}[theorem]{Proposition}
\newtheorem{corollary}[theorem]{Corollary}
\newtheorem{remark}[theorem]{Remark}
\newtheorem{example}[theorem]{Example}
\newcommand{\rd}{{\rm d}}
\newcommand{\e}{{\rm e}}
\newcommand{\N}{{\mathbb N}}
\newcommand{\R}{{\mathbb R}}
\newcommand{\C}{{\mathbb C}}
\newcommand{\Z}{{\mathbb Z}}
\newcommand{\dist}{\mathrm{dist}}
\newcommand\re{\mathrm{Re}\,}
\newcommand\im{\mathrm{Im}\,}
\newcommand\I{\mathrm{i}}
\newcommand\E{\mathcal{E}}
\DeclareMathOperator{\supp}{supp}
\DeclareMathOperator{\diam}{diam}
    \newcommand{\red}[1]{\textcolor{red}{\sout{#1}}}
    \newcommand\red[1]{}
\begin{document}

\title[Effective upper bounds in potential scattering]{Effective upper bounds on the number of resonances in potential scattering}
\keywords{Potential scattering, resonances, complex potentials, singular values, stationary phase}
\subjclass[2020]{81U24,35P25,35P15}
 \author[J.-C.\ Cuenin]{Jean-Claude Cuenin}
 \address[J.-C.\ Cuenin]{Department of Mathematical Sciences, Loughborough University, Loughborough,
 Leicestershire, LE11 3TU United Kingdom}
 \email{J.Cuenin@lboro.ac.uk}

\begin{abstract}
We prove upper bounds on the number of resonances and eigenvalues of Schrödinger operators $-\Delta+V$ with complex-valued potentials, where $d\geq 3$ is odd. The novel feature of our upper bounds is that they are \emph{effective}, in the sense that they only depend on an exponentially weighted norm of V. Our main focus is on potentials in the Lorentz space $L^{(d+1)/2,1/2}$, but we also obtain new results for compactly supported or pointwise decaying potentials. The main technical innovation, possibly of independent interest, are singular value estimates for Fourier-extension type operators. The obtained upper bounds not only recover several known results in a unified way, they also provide new bounds for potentials which are not amenable to previous methods. 
\end{abstract}

\maketitle


\section{Introduction}
\subsection{Counting resonances}
Assume that $d\geq 3$ is odd and that $V\in L^{\infty}_{\rm comp}(\R^d)$ is a bounded, compactly supported potential, possibly \emph{complex-valued}. The resolvent 
\begin{align*}
R_V(\lambda):=(-\Delta+V-\lambda^2)^{-1}:L^2(\R^d)\to L^2(\R^d),\quad \im\lambda\gg 1,
\end{align*}
extends to a meromorphic family (see \cite[Thm. 3.8]{MR3969938})
\begin{align*}
R_V(\lambda):L^2_{\rm comp}(\R^d)\to L^2_{\rm loc}(\R^d),\quad \lambda\in\C. 
\end{align*}
Scattering resonances are defined as the poles of this meromorphic continuation. Eigenvalues $z=\lambda^2$ correspond to resonances in the upper half plane.
Let $n_V (r)$ denote the number of resonances (counted with multiplicity) with absolute value at most $r$,
\begin{align*}
n_V (r):=\#\{\mbox{resonances }\lambda:\,|\lambda|\leq r\}.
\end{align*} 
The first polynomial bound $n_V(r)\leq C_Vr^{d+1}$ was proved by Melrose \cite{MR724031}.
Zworski \cite{MR1016891} proved the sharp upper bound
\begin{align}\label{Zworskis upper bound}
n_V(r)\leq C_Vr^d,\quad r\geq 1,
\end{align}
see also \cite[Thm. 3.27]{MR3969938} for a textbook presentation which uses a substantial simplification of the argument due to Vodev \cite{MR1163673}. Christiansen and Hislop \cite{MR2189242,MR2648080} proved that \eqref{Zworskis upper bound} is optimal for (Baire) generic complex or real-valued potentials in the sense that $\limsup_{r\to\infty}\log n(r)/\log r=d$.
An example of Christiansen shows that there are complex-valued potentials with no resonances \cite{MR2225694}. The question of whether \eqref{Zworskis upper bound} is optimal for arbitrary real-valued $V\in L^{\infty}_{\rm comp}(\R^d)$ is still open. 
Asymptotics for $n_V(r)$ are known in $d=1$ \cite{MR899652,MR1456597,MR95702,MR1802901} and for certain radial potentials \cite{MR987299,MR2190165}, or generic (in the sense of pluripotential theory) non-radial potentials supported in a ball in $d\geq 3$ \cite{MR3162489}. 
For potentials decaying like $\exp(-|x|^{1+\epsilon})$, Sá Barreto and Zworski \cite{MR1355631} proved
\begin{align}\label{Sa Barreto and Zworski}
n_V(r)\leq C_Vr^{d(1+1/\epsilon)},\quad r\geq 1.
\end{align}
For super-exponentially decaying potentials, Froese \cite{MR1629819} established an upper bound on $n_V(r)$ in terms of the growth of the Fourier transform of $V$. Returning to the compactly supported case, the estimate \eqref{Zworskis upper bound} admits far reaching generalizations to operators other than $-\Delta+V$, e.g.\ metric perturbations, obstacle scattering and scattering by finite-volume surfaces. The appropriate abstract framework is called \emph{black box scattering}. We will not discuss this further but refer to the recent book \cite{MR3969938} and survey article \cite{MR3625851} for more details and references.
We also consider the \emph{semiclassical} Schrödinger operator $-h^2\Delta+V$. It is clear that $n_V(r,h)$, the number of resonances $\leq r$, is equal to $n_{V/h^2}(r/h)$. The semiclassical analogue of~\eqref{Zworskis upper bound} (see \cite[Thm. 4.13]{MR3969938}) is
\begin{align}\label{semiclassical upper bound}
n_V(r,h)\leq C_Vh^{-d}r^d,\quad r\geq 1,\quad h\in (0,1].
\end{align}

\subsection{Effective bounds}
The previously discussed upper bounds are meaningful for a \emph{fixed} potential. Our main aim is to prove \emph{effective} upper bounds that makes the dependence of $C_V$ on $V$ \emph{explicit}. Korotyaev \cite[Theorem 1.1]{MR3574651} proved an effective upper bound in $d=1$, a simplified version of which (without explicit constants) being
\begin{align}\label{Korotyaev}
n_V(r)\lesssim ra+\ln (2+r)+\frac{Q}{1+r},\quad r>0,
\end{align}
provided $\supp(V)\subset [-a,a]$ and $Q:=\int_{\R}(1+|x|)|V(x)|\rd x<\infty$.
Our contribution is a far-reaching generalization of \eqref{Korotyaev} to higher dimensions
and to larger potential classes (Lorentz space, compactly supported or pointwise decaying).
For the sake of exposition, we temporarily continue to assume that $V$ is bounded and compactly supported (so that resonances are defined as above), but our estimates will be uniform in the respective potential class. We postpone the definition of resonances for non compactly supported potentials to Section \ref{sect. Meromorphic continuation}. The appearance of exponentially weighted norms is natural in view of the structure of the Schwartz kernel of the free resolvent (see e.g.\ \cite[3.1.16]{MR3969938}). For the definition of the Lorentz norm $\|V\|_{(d+1)/2,1/2}$, the reader is referred to Section 5.1
\begin{theorem}\label{thm. main intro}
Suppose $d\geq 3$ is odd and that $V$ is bounded and compactly supported with $\supp V\subset B(0,R)$. For every $\epsilon>0$ there exists a constant $C_1=C_1(\epsilon)$  (independent of $V$ and $R$), such that every resonance $\lambda$ satisfies
    \begin{align}
    |\lambda|&\leq C_1\|\e^{2(1+\epsilon)(\im\lambda)_-|\cdot|}V\|_{(d+1)/2,1/2}^{(d+1)/2},\label{main thm resonance free region Lp}\\
    |\lambda|&\leq C_1 R\langle\lambda R\rangle^{\epsilon} \e^{2(1+\epsilon)(\im\lambda)_-R}\|V\|_{\infty},\label{main thm resonance free region cpt}\\
    |\lambda|&\leq C_1 \langle\lambda \rangle^{\epsilon}\|\e^{2(1+\epsilon)(\im\lambda)_-|\cdot|}\langle\cdot\rangle^{1+\epsilon}V\|_{\infty},\label{main thm resonance free region ptw}
\end{align}
where $(\im\lambda)_-:=\max(0,-\im\lambda)$ and $\langle \lambda\rangle:=2+|\lambda|$.
Moreover, there exists an absolute constant $C_2\geq 2$ such that the number of resonances $|\lambda|\leq r$ is bounded by
\begin{align}
n_V(r)&\leq C_1 [\ln(C_1C_2r^{-2/(d+1)}\|\e^{2(1+\epsilon)r|\cdot|}V\|_{(d+1)/2,1/2})]^d\mbox{ if } r\geq C_2 \|V\|_{(d+1)/2}^{(d+1)/2},\label{main thm n_V Lp}\\
n_V(r)&\leq C_1 [\ln(C_1C_2 Rr^{-1}\langle r R\rangle^{\epsilon} \e^{2(1+\epsilon)rR}\|V\|_{\infty})]^d\mbox{ if } r\geq C_2 \|V\|_{\infty}^{1/2},\label{main thm n_V cpt}\\
n_V(r)&\leq C_1[\ln(C_1C_2\langle r \rangle^{\epsilon}r^{-1}\|\e^{2(1+\epsilon)r|\cdot|}\langle\cdot\rangle^{1+\epsilon}V\|_{\infty})]^d\!\mbox{ if } r\geq C_2 \|\langle\cdot\rangle^{d+\epsilon}V\|_{\infty}^{1/2}\label{main thm n_V ptw}.
\end{align}
 \end{theorem}
A more refined version of Theorem \ref{thm. main intro} is stated in Section \ref{sect. proof main thm. intro} (Theorem \ref{thm. main detailed}). 
The refinement has several aspects, among others:
\begin{enumerate}
    \item The dependence of $C_1$ on $\epsilon$ is explicit;
    \item The upper bound is valid for all $r>0$ (at the expense of having additional terms on the right hand side). We reiterate that our bounds are \emph{uniform}, not just asymptotic for large $r$. For this reason, we include terms like $r^{-1}$ in \eqref{main thm n_V cpt}, which would be negligible in the limit $r\to\infty$. They also play a role in the semiclassical bound and ensure that the argument of the logarithms in \eqref{main thm n_V Lp}, \eqref{main thm n_V cpt} are dimensionless, thus respecting dilation symmetry (scaling). The symmetry is broken in \eqref{main thm n_V ptw} since we have fixed a scale by using $\langle x\rangle$ in the norm. Theorem 7.1 will involve an arbitrary scale $R>0$, which restores the symmetry and makes the similarity to the compactly supported case more obvious.
    \item The upper bound holds for the number of resonances in a larger disk $D(\lambda_0,|\lambda_0|+r)$, where $\lambda_0\in\I\R_+$ (linearly) depends on a parameter $A$ that may be tuned. In particular, by choosing $A\gg r\gg 1$, one gets a bound on the number of resonances close to the real axis.
    \item There are additional parameters that may be tuned to change the exponential weights.
\end{enumerate}

\begin{remark}\label{remark literature results}
 The bounds \eqref{main thm n_V Lp}, \eqref{main thm n_V cpt}, \eqref{main thm n_V ptw} correctly reproduce the asymptotic results discussed before. More precisely, straightforward calculations reveal the following (see Appendix \ref{appendix examples} for proofs):
\begin{itemize}
    \item[(i)] 
\eqref{main thm n_V Lp}, \eqref{main thm n_V cpt}, \eqref{main thm n_V ptw} all imply Zworski's bound \eqref{Zworskis upper bound}. 
\item[(ii)] \eqref{main thm n_V Lp}, \eqref{main thm n_V ptw} both imply the bound \eqref{Sa Barreto and Zworski} of Sá Barreto and Zworski. 
\item[(iii)] \eqref{main thm n_V cpt}, \eqref{main thm n_V ptw} both imply the semiclassical bound \eqref{semiclassical upper bound}.
\end{itemize}
\end{remark}

The theorem also yields genuinely new bounds that cannot be obtained from any previous results in the literature.
\begin{example}\label{example sparse}
Assume $V=\sum_{j=1}^{\infty} H_j
\mathbf{1}_{\Omega_j}$, where $H_j\in \C$ and $\Omega_j\subset\R^d$ are mutually disjoint bounded measurable sets. Assume that $L_j:=\dist(\Omega_j,\bigcup_i\Omega_i\setminus\Omega_j)$ is increasing, $\sum_j\exp(-\eta L_j)<\infty$ for every $\eta>0$, and $\lim_{j\to\infty}L_j^{-1}\diam(\Omega_j)=0$. Potentials of this type were called \emph{sparse} in \cite{MR4426735}. For simplicity, assume that $|H_j|=1$, $L_j=j$ and $\Omega_j$ are balls with $|\Omega_j|\exp(Mj)<\infty$ for all $M>0$. In particular, $V$ need not decay at infinity. However, \eqref{main thm n_V Lp} implies that $n_V(r)$ is finite for all $r>0$ (see Section \ref{appendix examples} for details).
\end{example}

\subsection{Meromorphic continuation}\label{sect. Meromorphic continuation}
    By a simple density argument, the result of Theorem \ref{thm. main intro} can be extended to non compactly supported potentials as long as the respective norms are finite. In order to define resonances in this case we need the following results (see Section \ref{sect. singular value estimates} for the proofs).
\begin{proposition}\label{prop. meromorphic Lp}
Let $d\geq 3$ be odd, $\gamma>0$. If $V\in \e^{-2\gamma|\cdot|}L^{(d+1)/2,1/2}(\R^d)$, then $R_V$ has a meromorphic continuation to
$\im\lambda>-\gamma$.
\end{proposition}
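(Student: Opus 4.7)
The plan is to apply the meromorphic Fredholm theorem to the pencil $I+BS(\lambda)$ on the enlarged half-plane $\Omega_{\gamma}:=\{\lambda\in\C:\im\lambda>-\gamma\}$, with Corollary~\ref{cor. BS in weak Schatten} supplying the compactness input that enlarges the region from $\im\lambda>-(d+1)\gamma/\beta_d$ to $\im\lambda>-\gamma$. First, I would take \eqref{resolvent identity} as the \emph{definition} of $R_V(\lambda)$ wherever its right-hand side is well defined. For $\im\lambda\gg 1$, the Frank--Sabin bound~\eqref{Frank-Sabin} gives $\|BS(\lambda)\|\lesssim |\lambda|^{-2/(d+1)}\|V\|_{(d+1)/2}<1$, so $(I+BS(\lambda))^{-1}$ exists via a Neumann series and \eqref{resolvent identity} coincides with the usual resolvent on a far upper half-plane.

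Next, I would verify that $\lambda\mapsto BS(\lambda)$ is an analytic, compact-operator-valued function on $\Omega_{\gamma}$. Compactness at each fixed $\lambda\in\Omega_\gamma$ is exactly Corollary~\ref{cor. BS in weak Schatten}, which places $BS(\lambda)$ into the weak Schatten class $\mathfrak{S}^{d+1,\mathrm{w}}$. For the analytic dependence, I would exploit the explicit formula for the free resolvent kernel in odd dimensions, which has the shape $R_0(\lambda)(x,y)=P_d(\I\lambda|x-y|)\,\e^{\I\lambda|x-y|}|x-y|^{2-d}$ for a polynomial $P_d$; this is entire in $\lambda$, and the exponential weight $\e^{-\gamma|\cdot|}$ built into $V^{1/2},|V|^{1/2}$ dominates the growth $\e^{-(\im\lambda)|x-y|}$ whenever $\im\lambda>-\gamma$. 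Combined with the Schatten-type estimates underlying Corollary~\ref{cor. BS in weak Schatten}, this yields operator-norm analyticity on every strip $\{\im\lambda>-\gamma+\eta\}$, $\eta>0$.

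With these two ingredients, the meromorphic Fredholm theorem \cite[Thm.~C.8]{MR3969938} applied on each strip $\{\im\lambda>-\gamma+\eta\}$ produces a meromorphic inverse $(I+BS(\lambda))^{-1}$ there, with the high-$\im\lambda$ Neumann-series inverse from step one providing the required invertibility witness. Uniqueness of meromorphic extensions allows gluing these inverses together as $\eta\downarrow 0$ to cover all of $\Omega_{\gamma}$, and substituting back into \eqref{resolvent identity} then yields the claimed meromorphic continuation of $R_V$.

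The main obstacle I anticipate is not the compactness (already granted by Corollary~\ref{cor. BS in weak Schatten}) but the promotion of \emph{pointwise} analyticity of the kernel to operator-norm analyticity of $BS(\lambda)$, locally uniform in $\lambda$ on compacta of $\Omega_{\gamma}$. The cleanest route is to estimate $BS(\lambda)-BS(\lambda_0)$ in Schatten norm by applying Corollary~\ref{cor. BS in weak Schatten} (or a slight variant) to $|V|^{1/2}\bigl(R_0(\lambda)-R_0(\lambda_0)\bigr)V^{1/2}$, writing $R_0(\lambda)-R_0(\lambda_0)=\int_{\lambda_0}^{\lambda}\partial_\mu R_0(\mu)\,\rd\mu$ and absorbing the extra factor $|x-y|$ coming from differentiating the kernel into an infinitesimal shrinkage of the weight, which is permitted on any compact subregion of $\Omega_\gamma$.
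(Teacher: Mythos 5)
Your proposal is correct and follows essentially the same route as the paper: the paper also defines $R_V$ via \eqref{resolvent identity}, uses Corollary~\ref{cor. BS in weak Schatten} to place $BS(\lambda)$ in $\mathfrak{S}^{d+1,\rm w}$ (hence compact) for $\im\lambda>-\gamma$, and invokes the meromorphic Fredholm theorem with invertibility at large $\im\lambda$ coming from the norm bound on $BS$. You supply more detail on the operator-norm analyticity of $\lambda\mapsto BS(\lambda)$, which the paper leaves implicit, but this is elaboration rather than a different argument.
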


\begin{proposition}\label{prop. meromorphic ptw.}
Let $d\geq 3$ be odd, $\gamma,\epsilon>0$. If $V\in \e^{-2\gamma|\cdot|}\langle \cdot\rangle^{-1-\epsilon}L^{\infty}(\R^d)$, then $R_V$ has a meromorphic continuation to
$\im\lambda>-\gamma$.
\end{proposition}

\subsection{Resonances in the upper half plane}
The techniques used to prove resonance bounds are close to those used to prove \emph{eigenvalue bounds} for Schrödinger operators with \emph{complex potentials}. We refer for instance to \cite{MR4426735} for references on this subject.
Eigenvalues $z=\lambda^2$ are of course just resonances in the upper half-plane $\im\lambda\geq 0$. If $V$ is real-valued, then (e.g. by the Cwikel--Lieb--Rozenbljum bound if $d\geq 3$) there are only finitely many negative eigenvalues. If $V$ is complex-valued, Frank \cite{MR2820160} proved that all eigenvalues are contained in the disk 
\begin{align}\label{Frank's bounds}
|z|^{p-d/2}\lesssim \|V\|^{p}_{p}
\end{align}
for $d\geq 2$ and $d/2<p\leq (d+1)/2$. Bögli and the author \cite{bogli2021counterexample} produced a counterexample that shows \eqref{Frank's bounds} is false for $p>(d+1)/2$, thus disproving a conjecture of Laptev and Safronov \cite{MR2540070}. A consequence of this counterexample is that there exist potentials with arbitrary small $L^{p}$ ($p>(d+1)/2$) and $L^{\infty}$ norm whose eigenvalues accumulate to every point of the essential spectrum $[0,\infty)$. Compared to bounds on single eigenvalues, much less is known about the distribution and especially about the number of eigenvalues, i.e.\ the quantity
\begin{align*}
n_V^+:=\#\{\mbox{resonances }\lambda:\,\im\lambda\geq 0\}.
\end{align*} 
Frank, Laptev and Safronov \cite{MR3556444} proved that, when $d\geq 3$ is odd, then
\begin{align}\label{n_V+ FLS original bound intro}
n_V^+\lesssim \gamma^{-2}\|\e^{2\gamma|\cdot|}V\|_{(d+1)/2}^{d+1}
\end{align}
for any $\gamma>0$ (with implicit constant independent of $\gamma$).
The techniques developed here can be used to obtain an alternative (and sometimes sharper) bound. To set a benchmark, we test \eqref{n_V+ FLS original bound intro} on bounded potentials supported in $B(0,R)$, where $R$ is large and $\|V\|_{\infty}\leq 1$. Then \eqref{n_V+ FLS original bound intro} implies $n_V^+\lesssim R^{2d+2}$. Our method yields the improved bound $n_V^+\lesssim R^{2d}$. Indeed, by \cite{AADavies} (or by \eqref{cpt. singular values Ancona} with $k=1$ and the Birman--Schwinger principle), all resonances in the upper half plane (eigenvalues) must lie in a half-disk $|\lambda|\leq C_0 R$, $\im\lambda\geq 0$. Hence, choosing $r=C_0R$ in Theorem \ref{thm. main intro}, it follows from \eqref{main thm n_V cpt}
that $n_V^+\lesssim R^{2d}$ for all $R>\max(1,C_2/C_0)$.

\subsection{Overview of the proof}\label{sect. overview proof}
The proof of \eqref{Zworskis upper bound} starts by identifying resonances with a subset of the zeros of a certain determinant, see \cite{MR3969938} for a textbook presentation. The determinant there is
\begin{align}\label{determinant used in DZ}
\det(I-(VR_0(\lambda)\rho)^{d+1}),\quad\lambda\in\C,
\end{align}
where $\rho\in C_c^{\infty}(\R^d)$ is equal to $1$ on $\supp V$. Recall that, if $K$ is a trace class operator, the Fredholm determinant of $I-K$ is defined by
\begin{align*}
    \det(I-K):=\prod_{k\in\N}(1-\lambda_k(K)),
\end{align*}
where $\lambda_j(K)$ are the eigenvalues of $K$, repeated according to multiplicities.
The power $p=d+1$ in \eqref{determinant used in DZ} is convenient, but any integer $\alpha>d/2$ could be chosen. We will work with a different Fredholm determinant, 
\begin{align}\label{ptw. H(lambda) def.}
H_{\alpha}(\lambda):=\det(I-(-BS(\lambda))^{\alpha}),\quad\lambda\in\C,
\end{align}
where $\alpha>d/2$ is an integer,
defined in terms of the Birman-Schwinger operator 
\begin{align}\label{Birman--Schwinger operator}
BS(\lambda):=|V|^{1/2}R_0(\lambda)V^{1/2}.
\end{align}
Then the poles of the resolvent $R_V$ are among the zeros of $H_{\alpha}$ (this follows from the resolvent identity \eqref{resolvent identity}). Let $n_{H_{\alpha}}(r)$ be the number of zeros (counting multiplicities) of $H_{\alpha}$ in the disk $D(0,r)$.
The definition of the multiplicity $m_R(\lambda)$ of a resonance $\lambda$ is subtle, and we refer the reader to \cite[Sect. 3.2]{MR3969938}. However, for the purpose of proving upper bounds, it suffices to know that $m_R(\lambda)$ is less or equal than the multiplicity of $\lambda$ as a zero of $H_{\alpha}$, see \cite[Thm. 3.26]{MR3969938} (The proof there works for any integer $\alpha>d/2)$. In particular, this implies that $n_V(r)\leq n_{H_{\alpha}}(r)$.

In order to ensure that $H_{\alpha}$ is an entire function we need to restrict to bounded, compactly supported potentials; in the general case, the estimates on $BS(\lambda)$ do not allow us to exclude a singularity at the origin (see e.g.\ \eqref{Frank--Laptev--Safronov}). 
Froese \cite{MR1629819} also used a Birman--Schwinger argument, but he considers a different determinant (the determinant of the scattering matrix). Our definition is closer to \cite{MR3556444} and other works concerned with the distribution of \emph{eigenvalues} for Schrödinger operators with complex potentials, e.g.\ \cite{BorichevEtAl2009,MR2559715,MR3717979, MR3730931,MR2540070}. However, a major difference to these works is that we do not use regularized determinants. Zworski \cite{MR1016891} observed that, except when $d=3$, these grow too fast as $\im\lambda\to -\infty$ (also see the discussion before \cite[Thm. 3.27]{MR3969938}), and thus they become less useful for the purpose of counting resonances (in the lower half plane).

It will be convenient to consider the regularized counting functions
\begin{align}\label{N_V(r)}
N_V(r):=\int_0^r\frac{n_V(t)}{t}\rd t,\quad N_{H_{\alpha}}(r):=\int_0^r\frac{n_{H_{\alpha}}(t)}{t}\rd t,
\end{align}
for which we also have $N_V(r)\leq N_{H_{\alpha}}(r)$.
Moreover, for any $s>1$, we have that $n_V(r)\leq (\ln s)^{-1}N_V(sr)$ (see e.g.\ \cite{MR1456597}). It will therefore be sufficient to prove upper bounds on $N_{H_{\alpha}}(sr)$ for all $r>0$ and some fixed $s>1$. This is achieved by means of Jensen's formula,
\begin{align}\label{Jensen's formula}
    N_{H_{\alpha}}(sr)=\frac{1}{2\pi}\int_0^{2\pi}\ln|H_{\alpha}(sr\e^{\I\theta})|\rd \theta-\ln|H_{\alpha}(0)|.
\end{align}
Using Weyl's inequality between eigenvalues and singular values one can show that (see Lemma \ref{lemma Weyl type first term})
\begin{align}\label{intro first term in Jensen}
    \ln|H_{\alpha}(\lambda)|\lesssim  \sum_{k\in\N}\ln(1+[s_k(BS(\lambda))]^{\alpha}),
\end{align}
which gives control over the first term in \eqref{Jensen's formula}, provided we can prove good upper bounds on the singular values $s_k(BS(\lambda))$.
However, the second term is a nuisance since the upper bounds for the quantity $-\ln|H_{\alpha}(\lambda)|$ will blow up at $\lambda=0$. To circumvent this problem we will consider a different function 
\begin{align}\label{def. F_alpha}
F_{\alpha}(k):=\frac{H_{\alpha}(\lambda_0+k)}{H_{\alpha}(\lambda_0)},\quad k\in\C,
\end{align}
that is normalized at zero, i.e.\ $F_{\alpha}(0)=1$, and that counts resonances in a larger disk $D(\lambda_0,|\lambda_0|+sr)$, with $\lambda_0\in\I\R_+$ (see Figure 1). Choosing $|\lambda_0|$ sufficiently large, one can show that (see Lemma \ref{lemma Weyl type second term})
\begin{align}\label{intro second term in Jensen}
   -\ln|H_{\alpha}(\lambda_0)|\lesssim \sum_{k\in\N}[s_k(BS(\lambda_0))]^{\alpha}.
\end{align}
Applying Jensen's formula to $F_{\alpha}$ then yields 
\begin{align}\label{Overview upper bound n_V}
    n_V(r)\ln s\lesssim  \max_{\lambda\in\partial D(\lambda_0,|\lambda_0|+sr)} \sum_{k\in\N}\ln(1+[s_k(BS(\lambda))]^{\alpha})+
   \sum_{k\in\N}[s_k(BS(\lambda_0))]^{\alpha}.
\end{align}
The maximum will be estimated separately for the part of the boundary in the upper and in the lower half plane. In the upper half plane, we will prove polynomial bounds of the form
\begin{align}\label{Overview model bound s_k upper}
 s_k(BS(\lambda))\lesssim M_+(\lambda)k^{-1/\beta_+},\quad \im\lambda\geq 0,
\end{align}
where $M_+(\lambda)$ is one of the norms of $V$ appearing in Theorem \ref{thm. main intro} and $\beta_+>0$.
The singular value estimates for $\im\lambda<0$ will be based on \emph{Stone's formula} (see e.g.\ \cite[(3.1.19)]{MR3969938}) in the form
\begin{align}\label{BS Stone formula}
BS(\lambda)-BS(-\lambda)=a_d\lambda^{d-2}|V|^{1/2}\mathcal{E}(\lambda)\mathcal{E}(\overline{\lambda})^*V^{1/2},\quad \im \lambda<0,
\end{align}
where $\mathcal{E}(\lambda)f(x):=\mathcal{F}(f\rd S)(-\lambda x)$ denotes the (analytically continued) Fourier extension operator (see Section \ref{sect. Fourier extension operator}). We will prove \emph{exponential} bounds of the form
\begin{align}\label{Overview model bound s_k lower}
    s_k(|V|^{1/2}\mathcal{E}(\lambda)\mathcal{E}(\overline{\lambda})^*|V|^{1/2})\lesssim M_-(\lambda) \exp(-c\epsilon k^{1/\beta_-}),\quad \lambda\in\C,
\end{align}
where $M_-(\lambda)$ is of the same type as $M_+(\lambda)$ and $c,\beta_->0$. For instance, an admissible choice would be 
\begin{align}
M_+(\lambda)&=|\lambda|^{-\frac{2}{d+1}}\|V\|_{(d+1)/2,1/2}^{(d+1)/2},\\
M_-(\lambda)&=|\lambda|^{-\frac{d(d-1)}{d+1}}\|\e^{2(1+\epsilon)|\im\lambda||\cdot|}V\|_{(d+1)/2,1/2}^{(d+1)/2} \label{M_- example}
\end{align}
and $\beta_+=d+1$, $\beta_-=d-1$ (see Proposition \ref{prop. sk(BS) Lp}).
Using \eqref{Overview model bound s_k upper}, \eqref{BS Stone formula}, \eqref{Overview model bound s_k lower}, we can estimate \eqref{Overview upper bound n_V}, leading to an upper bound 
\begin{align}\label{n<nu+nl+n0}
    n_V(r)\lesssim (\ln s)^{-1}(\tilde{n}_+(sr)+\tilde{n}_-(sr)+\tilde{n}_0),
\end{align}
where $\tilde{n}_+(sr),\tilde{n}_-(sr)$ denote the contributions of the first term in \eqref{Overview upper bound n_V}, corresponding to the upper and lower half plane, respectively, and $\tilde{n}_0$ denotes the contribution of the second term in \eqref{Overview upper bound n_V}. Of course, these quantities also depend on $V$, but only through one of its norms.
For large $r$, the dominant term is $n_-(r)$, and this leads to the bounds \eqref{main thm n_V Lp}, \eqref{main thm n_V cpt}, \eqref{main thm n_V ptw} stated in Theorem \ref{thm. main intro}. The resonance-free regions can be determined by the Birman--Schwinger principle, i.e.\ the fact that $\lambda^2$ is an eigenvalue of $-\Delta+V$ if and only if $-1$ is an eigenvalue of $BS(\lambda)$. This implies that $\|BS(\lambda)\|\geq 1$ for any resonance, which yields \eqref{main thm resonance free region Lp}, \eqref{main thm resonance free region cpt}, \eqref{main thm resonance free region ptw}. Since the operator norm equals the first singular value, these will therefor also follow from the singular value bounds.

\subsection{Technical novelties}
The main technical novelty in this work compared to the textbook result \cite{MR3969938} is the uniform control
of the singular values \eqref{Overview model bound s_k upper}, \eqref{Overview model bound s_k lower} in terms of the potential.
By keeping track of the constants in the presentation of \cite{MR3969938} one could in principle also obtain effective estimates, in terms of $\|V\|_{\infty}$ and the size of the support of $V$. However, compared to \eqref{main thm n_V cpt} (the compactly supported case), this bound would involve $R^d\|V\|_{\infty}$ as opposed to $R\|V\|_{\infty}$. This is because instead of the trivial bound for the Fourier extension operator (taking absolute values inside the integral) we use a new energy estimate (Lemma \ref{lemma Agmon--Hörmander complex}). The latter can be viewed as a generalization of the classical Agmon--Hörmander bound \cite{MR0466902} to ``complex energies".
Similar results were obtained by Burq \cite{MR1876933} and Gannot \cite{MR3393682}, but our estimate is scale-invariant and is nearly optimal in the exponential weight.
The singular value estimates for compactly supported potentials can be summed dyadically to yield the results for pointwise decaying potentials.

The main advancement in this article is that we allow potentials with only average decay (the Lorentz case). To the best of our knowledge, our bounds on the number of resonances and on resonance-free regions are the first that involve only a (weighted) Lorentz norm. The corresponding singular value estimates are related to a result of Frank--Laptev--Safronov \cite{MR3556444}, who proved
\begin{align}\label{Frank--Laptev--Safronov}
\|BS(\lambda)\|_{\mathfrak{S}^{d+1}}\lesssim |\lambda|^{-\frac{2}{d+1}}\|\e^{2C_d(\im\lambda)_-|\cdot|}V\|_{(d+1)/2},\quad\lambda\in\C,
\end{align}
for some $C_d>1$. The case $\im\lambda\geq 0$ was established by Frank--Sabin \cite{MR3730931}.
In view of the exponential growth of the free resolvent, the weight cannot be smaller than $\e^{2(\im\lambda)_-|\cdot|}$.
By Stone's formula, \eqref{Frank--Laptev--Safronov} implies a polynomial bound on the singular values of the operator on the left of \eqref{Overview model bound s_k lower} (involving the Fourier extension operator $\mathcal{E}(\lambda)$). If $\lambda$ were real, then this would be a $TT^*$ operator, and its singular values would coincide with those of $T^*T$. The latter is an operator on the unit sphere, and one expects its singular values to decay exponentially (if $V$ has sufficient decay). If $\lambda$ is complex, then $\mathcal{E}(\lambda)\mathcal{E}(\overline{\lambda})^*$ is not a $TT^*$ operator, but it still has a translation-invariant kernel. However, to recover the exponential decay of singular values, one has to look at $\mathcal{E}(\lambda)\mathcal{E}(\lambda)^*$, which is not controlled by a translation-invariant kernel. Hence, the (by now) standard methods of proving Schatten norm estimates for such operators (either complex interpolation \cite{MR3730931} or the multilinear Hardy--Littlewood--Sobolev inequality \cite{MR3254332}) are not readily applicable. To overcome this difficulty, we adapt Bourgain's proof of the Stein--Tomas inequality \cite{MR1097257} to the trace ideal setting. It turns out that this method is robust enough to handle the case of complex $\lambda$. Moreover, it yields estimates that are stronger than \eqref{Frank--Laptev--Safronov} in two important ways:
\begin{enumerate}
\item We obtain exponential decay of singular values in \eqref{Overview model bound s_k lower} and
\item The weight $\e^{2(1+\epsilon)(\im\lambda)_-|\cdot|}$ in \eqref{M_- example} is essentially optimal (possibly up to an $\epsilon$ loss).
\end{enumerate}
However, our method only gives a restricted weak type inequality. This means two things: First, the Schatten space $\mathfrak{S}^{d+1}$ has to be replaced by the weak Schatten space $\mathfrak{S}^{d+1,w}$. Second, the $L^{(d+1)/2}$ of the potential has to be replaced by the $L^{(d+1)/2,1/2}$ Lorentz norm. 

In addition to the technical innovations discussed above, there are two additional new features worth highlighting. The first is a stationary phase estimate for certain oscillatory integrals with complex-valued phase functions. Although such integrals are covered e.g.\ in Hörmander's treatise of stationary phase \cite{MR1065993}, our phase functions are more special and allow for a more elementary proof (without reference to the Malgrange preparation theorem). The second new feature is a structure formula for $\mathcal{E}(\lambda)(I-\epsilon^2\Delta_S)$, where $\Delta_S$ the Laplace--Beltrami operator on the unit sphere $\mathbb{S}^{d-1}$. This is needed in order to reduce the exponential decay estimate \eqref{Overview model bound s_k lower} to Weyl's asymptotics. The trick is not new (see \cite{MR3969938} and references there) but the argument is much more delicate since we take cancellations of the phase into account. Again, if one only uses size estimates (taking absolute values inside the integral), this step becomes almost trivial.

\subsection{Outline of the paper}
In Section \ref{sect. Preliminaries}, we briefly review some standard results about singular values of compact operators, the particular resolvent identity used in this paper, and the Fourier extension operator $\mathcal{E}(\lambda)$. In Section \ref{sect. Agmon--Hörmander estimate}, we we prove a generalization of the Agmon--Hörmander bound for $\mathcal{E}(\lambda)$ to complex $\lambda$. In Section~\ref{sect. Stein--Tomas estimate}, we prove a similar generalization of the Stein--Tomas bound. Section \ref{sect. singular value estimates} contains the key estimates for the singular values of the Birman--Schwinger operator for the potential classes discussed in the introduction (Lorentz, compactly supported, poitnwise decaying).
In Section \ref{sect. Fredholm det.} we discuss Fredholm determinants and make the arguments in the introduction more precise. In Section \ref{sect. proof main thm. intro} we prove very detailed effective bounds for the number of resonances, Theorem \ref{thm. main detailed}, and we show that Theorem \ref{thm. main intro} follows from this. 
In Appendix A, we give the proof of a certain `structure formula' for commuting $\mathcal{E}(\lambda)$ with powers of the Laplacian on the unit sphere. In Appendix B, we provide additional details of the proof of Theorem 7.1. In Appendix C, we prove Remark \ref{remark literature results} and Example \ref{example sparse}.


\subsection*{Notation}
We write $A\lesssim B$ or $A=\mathcal{O}(1)B$ for two non-negative quantities $A,B$ to indicate
that there is a constant $C>0$ such that $A\leq C B$. To highlight the dependence of the constant on a parameter $\epsilon$, the notation $A\lesssim_{\epsilon}B$ is sometimes used. 
The dependence on fixed parameters like the dimension $d$ is usually omitted.
The notation $A\asymp B$ means $A\lesssim B\lesssim A$, and $A\ll B$ means that $A\leq cB$ for some sufficiently small constant $c>0$.
If $T:X\to Y$ is a bounded linear operator between two Hilbert spaces $X$ and $Y$, we denote its operator norm by $\|T\|_{X\to Y}$, or just by $\|T\|$ if the spaces are clear from the context. The indicator function of a set $\Omega\subset \R^d$ is denoted by $\mathbf{1}_{\Omega}$. We define $\langle x\rangle:=2+|x|$. The natural numbers are denoted by $\N=\{1,2,\ldots\}$. The upper/lower half plane is denoted by $\C^{\pm}=\{\lambda\in\C:\,\pm\im\lambda>0\}$. We also use the following abbreviations, especially in Sections 5 and 6,
\begin{align}
     c_{\delta}:=\delta^{-\frac{(d-1)^2}{d+1}},\quad
    \rho_{\delta}(x,\lambda):=\e^{-\sqrt{1+\delta}|\im\lambda||x|},\quad
        \rho_{\delta,\epsilon}(x,\lambda):=\e^{-(\sqrt{1+\delta}|\im\lambda|+\epsilon|\lambda|)|x|}.
\end{align}

\subsection*{Acknowledgments}
The author thanks Alexei Stepanenko and Konstantin Merz for useful discussions, and Maciej Zworski for valuable feedback on an earlier version of the article. He also thanks the anonymous reviewers for their careful reading of the manuscript and numerous comments, which significantly improved the paper. Support through the Engineering \& Physical Sciences Research Council (EP/X011488/1) is acknowledged.

\section{Preliminaries}\label{sect. Preliminaries}

\subsection{Singular values and Schatten spaces}
We denote by $\mathfrak{S}^p$ the Schatten class of order $p\in (0,\infty)$ over the Hilbert space $L^2(\R^d)$ and by $\|\cdot\|_{\mathfrak{S}^p}$ the corresponding Schatten norm
\begin{align*}
\|K\|_{\mathfrak{S}^p}:=\big(\sum_{k\in\N}s_k(K)^p\big)^{\frac{1}{p}},
\end{align*}
where $(s_k(K))_{k\in\N}$ denotes the sequence of singular values of the compact operator $K$, that is, the square roots of the eigenvalues of $K^*K$, in nonincreasing
order and repeated according to multiplicities (note that \cite{MR3969938} use a different convention, where the enumeration starts at $k=0$). The weak Schatten class $\mathfrak{S}^{p,\rm w}$ of order $p\in (0,\infty)$ is the set of all compact operators on $L^2(\R^d)$ for which the quasinorm $\|K\|'_{\mathfrak{S}^{p,\rm w}}:=\sup_{k\in\N}k^{\frac{1}{p}}s_k(K)$ is finite. For $p>1$, an equivalent \emph{norm} is given by
\begin{align*}
\|K\|_{\mathfrak{S}^{p,\rm w}}:=\sup_{N\in\N}N^{\frac{1}{p}-1}\sum_{k=1}^Ns_k(K),
\end{align*} 
see e.g.\ \cite[Ch. 11, Sect. 6]{MR1192782}. 
We will use the following standard facts for singular values repeatedly throughout the article (see e.g.\ \cite{MR2154153}). If $A,B$ are compact operators, then for $j,k\geq 0$,
\begin{align}
s_{j+k+1}(AB)&\leq s_{j+1}(A)s_{k+1}(B),\label{s_j(AB)}\\
s_{j+k+1}(A+B)&\leq s_{j+1}(A)+s_{k+1}(B)\label{s_j(A+B)}.
\end{align} 
If $A$ is compact and $B$ is bounded, then
\begin{align}
    s_k(AB),s_k(BA)\leq s_k(A)\|B\|.
\end{align}
Moreover, we have the elementary equality $s_k(K)=s_k(K^*)$.

\subsection{Resolvent identity}
If $V$ is bounded, then, by iterating the second resolvent identity,
\begin{align*}
R_V(\lambda)=R_0(\lambda)-R_0(\lambda)VR_{V}(\lambda), \quad\im\lambda\gg 1,
\end{align*}
one obtains
\begin{align}\label{resolvent identity}
R_V(\lambda)=R_0(\lambda)-R_0(\lambda)V^{1/2}(I+BS(\lambda))^{-1}|V|^{1/2}R_0(\lambda).
\end{align}
This formula is valid for $\im\lambda\gg 1$ since
$\|BS(\lambda)\|\leq |\im(\lambda^2)|^{-1}\|V\|_{\infty}$.
If $V$ is unbounded, then one can use \eqref{resolvent identity} as the \emph{definition} of $R_V(\lambda)$, $\im\lambda\gg 1$. This is a classical construction going back to Kato \cite{MR0190801}, see also  \cite{MR2201310} for abstract results in the nonselfadjoint setting.

\subsection{Fourier extension operator}\label{sect. Fourier extension operator}
Consider the (analytically continued) Fourier extension operator
\begin{equation}\label{Fourier extension operator}
\begin{split}
&\mathcal{E}(\lambda):L^1(\mathbb{S}^{d-1})\to L^{\infty}_{\rm loc}(\R^d),\\
&\mathcal{E}(\lambda)g(x)=\int_{S^{d-1}}\e^{\I\lambda x\cdot\xi}g(\xi)\rd S(\xi),\quad x\in\R^d,\quad \lambda\in\C,
\end{split}
\end{equation}
where $\rd S$ denotes induced Lebesgue measure on the unit sphere $\mathbb{S}^{d-1}$. It is immediate from the triangle inequality that $\mathcal{E}(\lambda)$ is a bounded operator. By Cauchy--Schwarz, it also follows that $\mathcal{E}(\lambda):L^2(\mathbb{S}^{d-1})\to L^{2}_{\rm loc}(\R^d)$ is bounded. For $\lambda\geq 0$, the following estimates hold for any $g\in C^{\infty}(\mathbb{S}^{d-1})$:
\begin{align}
\|\mathcal{E}(\lambda) g\|_{L^{\frac{2(d+1)}{d-1}}(\R^d)}&\lesssim \lambda^{-\frac{d(d-1)}{2(d+1)}}\|g\|_{L^2(\mathbb{S}^{d-1})},\label{ST for lambda>0}\\
\|\mathcal{E}(\lambda) g\|_{L^{2}(B(0,R))}&\lesssim R^{\frac{1}{2}}\lambda^{-\frac{d-1}{2}}\|g\|_{L^2(\mathbb{S}^{d-1})}.\label{AH for lambda>0}
\end{align}
These estimates follow by rescaling from $\lambda=1$, in which case \eqref{ST for lambda>0} is known as the \emph{Stein--Tomas} estimate \cite[Prop. IX.2.1]{MR1232192} and \eqref{AH for lambda>0} as the \emph{Agmon--Hörmander} estimate \cite[Thm. 2.1]{MR0466902}.

\section{Agmon--Hörmander estimate}\label{sect. Agmon--Hörmander estimate}
In this section we prove a generalization of the Agmon--Hörmander bound \eqref{AH for lambda>0}.
For later purposes it will be convenient to consider the slightly more general operators 
\begin{align}\label{def. Ea}
\mathcal{E}_a(\lambda)g(x):=\int_{\mathbb{S}^{d-1}}\e^{\I\lambda x\cdot\xi}a(x,\xi)g(\xi)\rd S(\xi),\quad x\in\R^d,\quad \lambda\in\C,
\end{align}
where $a(x,\xi)$ is bounded and smooth in the $\xi$-variable. We are looking for uniform estimates for $\E_a$ over a family of functions $a$ in the unit ball of $L^{\infty}(\R^d_x;C^N(\mathbb{S}_{\xi}^{d-1}))$:
\begin{align}\label{family a uniform bound}
\|a\|_{L^{\infty}(\R^d_x;C^N(\mathbb{S}_{\xi}^{d-1}))}\leq 1.
\end{align}
Here $N$ is a sufficiently large (depending only on the dimension) but fixed integer.
The following proposition is an analogue of the Agmon--Hörmander bound \eqref{AH for lambda>0} for $\im\lambda <0$ and is one of the main technical ingredients. It improves upon \cite[(2.5)]{MR1876933} and \cite[Lemma 1.1]{MR3393682}.

\begin{proposition}\label{lemma Agmon--Hörmander complex}
Let $d\geq 2$. $\lambda\in\C$. For each $R>0$, $\delta\in (0,1]$, we have 
\begin{align}\label{Agmon--Hörmander complex}
\|\rho_{\delta}\E_a(\lambda)g\|_{_{L^2(B(0,R))}}\lesssim  R^{\frac 12}|\delta\lambda|^{-\frac{d-1}{2}}\|g\|_{L^2(\mathbb{S}^{d-1})},
\end{align}
uniformly subject to \eqref{family a uniform bound}, where $\rho_{\delta}(x,\lambda):=\e^{-\sqrt{1+\delta}|\im\lambda||\cdot|}$.
\end{proposition}

For the proof we will need the following lemma;
for $\im\lambda=0$ it is an immediate consequence of Plancherel's theorem.

\begin{lemma}\label{prop. complex Plancherel}
Let $n\geq 1$, $\lambda\in\C$, $\epsilon>0$. If $f$ is supported in $B(0,\epsilon/2)\subset\R^n$, then
\begin{align}\label{complex Plancherel}
\|\e^{-\epsilon|\im\lambda||\cdot|}\widehat{f}(\lambda\,\cdot)\|_{L^2(\R^n)}\lesssim  |\lambda|^{-\frac{n}{2}}\|f\|_{L^2(\R^n)}.
\end{align} 
\end{lemma}

\begin{proof}
1. By scaling, we may assume without loss of generality that $\epsilon=1$. Indeed, if $f$ is supported in $B(0,\epsilon/2)$, then $g(\xi):=f(\epsilon\xi)$ is supported in $B(0,1/2)$, and if \eqref{complex Plancherel} holds with $g$ in place of $f$ and with $\epsilon=1$, then
\begin{align*}
    \epsilon^{-n}\|\e^{-|\im\lambda||\cdot|}\widehat{f}(\lambda/\epsilon\,\cdot)\|_{L^2(\R^n)}\lesssim  |\lambda|^{-\frac{n}{2}}\epsilon^{-\frac{n}{2}}\|f\|_{L^2(\R^n)}.
\end{align*}
Changing variables $\lambda=\epsilon\widetilde{\lambda}$ yields \eqref{complex Plancherel} wth $\widetilde{\lambda}$ in place of $\lambda$.
By the monotone convergence theorem it suffices to prove the estimate for $\epsilon=1$ over a ball $B(0,R)$ with arbitrary $R>0$. By a change of scale $x=Ry$, the inequality for $R=1$ and $\lambda$ replaced by $\lambda R$ yields the general case $R>0$.

2. Let $\chi_1,\chi_2$ be non-negative bump functions such that $\chi_1\geq \mathbf{1}_{B(0,1)}$, $\supp(\chi_1)\subset B(0,2)$ and $\chi_2\geq \mathbf{1}_{B(0,1/2)}$, $\supp(\chi_2)\subset B(0,3/4)$. Because of the support assumption on $f$, we then have 
\begin{align}\label{Appendix A FT f chi23}
\|\e^{-|\im\lambda||\cdot|}\widehat{f}(\lambda\,\cdot)\|_{L^2(B(0,1))}^2\leq 
\|\chi_1\e^{-|\im\lambda||\cdot|}\widehat{\chi_2 f}(\lambda\,\cdot)\|_{L^2(\R^n)}^2.
\end{align}
Since $\chi_2$ is smooth and supported in $B(0,3/4)$, it follows that $\widehat{\chi_2}$ is an entire function, satisfying
\begin{align}\label{Paley-Wiener}
    |\widehat{\chi_2}(z)|\leq C_N(1+|z|)^{-N}\e^{\frac{3}{4}|\im z|}
\end{align}
for all $z\in\C^n$ and all $N>0$, see e.g. \cite[7.3]{MR1065993}. Thus,
\begin{align}
    |\widehat{\chi_2 f}(\lambda x)|&=|\int_{\R^n}\widehat{\chi_2}(\lambda x-y)\widehat{f}(y)\rd y|\\
    &\leq C_N \e^{\frac{3}{4}|\im \lambda||x|}\left(\int (1+|y|+|(\im\lambda) x|)^{-N}\rd y\right)^{\frac{1}{2}} \|f\|_{L^2},
\end{align}
where we used Cauchy--Schwarz and Plancherel and changed variables $y\to y-(\re\lambda) x$. This yields
\begin{align}\label{dydx}
    \|\chi_1\e^{-|\im\lambda||\cdot|}\widehat{\chi_2 f}(\lambda\,\cdot)\|_{L^2}^2\leq C_N\|f\|_{L^2}^2 \int (1+|y|+|(\im\lambda) x|)^{-N}\chi_1(x)^2\rd y\rd x.
\end{align}
Since $\chi_1$ has compact support, this is bounded by $\|f\|_{L^2}^2$, which is the desired bound if $|\lambda|\leq 1$.

3. If $|\lambda|>1$, we distinguish between $|\im\lambda|\geq |\re\lambda|$ or $|\im\lambda|<|\re\lambda|$. In the first case, we change the order of integration in \eqref{dydx} (Fubini--Tonelli) and change variables $x\to |\im\lambda|^{-1}x$ to obtain an upper bound of $|\im\lambda|^{-n}\|f\|_{L^2}^2$, which is the desired bound in this case. If $|\im\lambda|<|\re\lambda|$, instead of using Cauchy--Schwarz in \eqref{Paley-Wiener}, we only estimate
\begin{align}
    |\widehat{\chi_2 f}(\lambda x)|
    \leq C_N \e^{\frac{3}{4}|\im \lambda||x|}\int (1+|(\re\lambda) x-y|)^{-N}|\widehat{f}(y)|\rd y
\end{align}
and use 
\begin{align}\label{Schwartz tails}
    (1+s)^{-N}\lesssim \mathbf{1}\{s\leq 1\}+\sum_{j=0}^{\infty}2^{-Nj}\mathbf{1}\{2^{j-1}\leq s\leq 2^j\}
\end{align}
for $s=|(\re\lambda) x-y|$. We use Cauchy--Schwarz for each term separately,
\begin{align}
&\int \mathbf{1}\{|(\re\lambda) x-y|\leq 1\}|\widehat{f}(y)|\rd y\lesssim \left(\int_{|(\re\lambda) x-y|\leq 1} |\widehat{f}(y)|^2\rd y\right)^{\frac{1}{2}},\\
 &\int \mathbf{1}\{2^{j-1}\leq |(\re\lambda) x-y|\leq 2^j\}|\widehat{f}(y)|\rd y\lesssim 2^{\frac{nj}{2}}\left(\int_{2^{j-1}\leq |(\re\lambda) x-y|\leq 2^j} |\widehat{f}(y)|^2\rd y\right)^{\frac{1}{2}}.
\end{align}
Then, for $N>n$, \eqref{Schwartz tails} implies
\begin{align}
   & \|\chi_1\e^{-|\im\lambda||\cdot|}\widehat{\chi_2 f}(\lambda\,\cdot)\|_{L^2}^2
   \lesssim_N \int\int_{|(\re\lambda) x-y|\leq 1} |\widehat{f}(y)|^2\rd y\rd x\\
    &+\int\left|\sum_{j=0}^{\infty}2^{\frac{nj}{2}-N}\left(\int_{2^{j-1}\leq |(\re\lambda) x-y|\leq 2^j} |\widehat{f}(y)|^2\rd y\right)^{\frac{1}{2}}\right|^2\rd x\\
    &\lesssim \int\int_{|(\re\lambda) x-y|\leq 1} |\widehat{f}(y)|^2\rd y\rd x+\sum_{j=0}^{\infty}2^{(n+1)j-2Nj}\int\int_{2^{j-1}\leq |(\re\lambda) x-y|\leq 2^j} |\widehat{f}(y)|^2\rd y\rd x,
\end{align}
where we used Cauchy--Schwarz for the sum in the last line. By Fubini--Tonelli, 
\begin{align}
    &\int\int_{|(\re\lambda) x-y|\leq 1} |\widehat{f}(y)|^2\rd y\rd x\lesssim|\re\lambda|^{-n}\int |\widehat{f}(y)|^2\rd y,\\
    &\int\int_{2^{j-1}\leq|(\re\lambda) x-y|\leq 2^j} |\widehat{f}(y)|^2\rd y\rd x\lesssim 2^{jn}|\re\lambda|^{-n}\int |\widehat{f}(y)|^2\rd y.
\end{align}
The resulting geometric series is summable for $2N>2n+1$, and Plancherel yields the desired bound.
\end{proof}

\begin{proof}[Proof of Proposition \ref{lemma Agmon--Hörmander complex}]
1.
We first give the proof for $\E(\lambda)$.
By a standard partition of unity argument, we may replace $\E(\lambda)$ by (with abuse of notation)
\begin{align}\label{replacement of E(lambda)}
\E_1(\lambda) g(x):=\int_{\R^{d-1}}\e^{\I\lambda(x'\cdot\xi'+x_1\psi(\xi'))}\chi(\xi')g(\xi')\rd\xi',
\end{align}
where $x=(x_1,x')$, $\xi=(\xi_1,\xi')$, $\psi(\xi')=\sqrt{1-|\xi'|^2}$ and $\chi$ is a smooth function that has support in a fixed small neighborhood of the origin. Since this neighborhood can be covered by approximately $\delta^{-\frac{d-1}{2}}$ finitely overlapping balls of diameter $C\delta^{\frac 12}$, it suffices to prove 
\begin{align}\label{Agmon--Hörmander complex suff. to prove}
\|\rho_{\delta}\E_1(\lambda)g\|_{_{L^2(B(0,R))}}\lesssim  R^{\frac 12}|\lambda|^{-\frac{d-1}{2}}\|g\|_{L^2(\mathbb{S}^{d-1})},
\end{align}
whenever $g$ is supported in $B(0,C\delta^{\frac 12})$.
We freeze the $x_1$ variable and denote the operator acting on the remaining variables $x'=(x_2,\ldots,x_d)$ by $\E_{x_1}(\lambda)$.
From the elementary estimate
$2|x_1||x'|\leq \delta x_1^2+\delta^{-1}|x'|^2$
it follows that
\begin{align*}
(|x_1|+|x'|)^2\leq (1+\delta)x_1^2+(1+\delta^{-1})|x'|^2.
\end{align*}
Rescaling $x_1\to\frac{x_1}{\sqrt{1+\delta}}$, $x'\to\frac{x'}{\sqrt{1+\delta^{-1}}}$ and multiplying the above inequality by $(1+\delta)$ we obtain
\begin{align}\label{x vs x' x1}
\sqrt{1+\delta}|(x_1,x')|\geq |x_1|+\epsilon |x'|\quad \mbox{with}\quad \epsilon:=\sqrt{(1+\delta)/(1+\delta^{-1})}.
\end{align}
Using this, one observes that 
\begin{align}\label{sqrt 1+delta ineq.}
|\e^{-\sqrt{1+\delta}|\im\lambda||(x_1,x')|}\E_{x_1}(\lambda) g(x')|\leq \e^{-\epsilon|\im\lambda||x'|}|\widehat{g_{\lambda,x_1}}(-\lambda x')|,
\end{align}
where $g_{\lambda,x_1}(\xi'):=\e^{-|\im\lambda||x_1|}\e^{\I\lambda x_1\psi(\xi')}\chi(\xi')g(\xi')$. Since $\frac{1}{4}\delta^{\frac{1}{2}}\leq \frac{1}{2}\epsilon$, Lemma \ref{prop. complex Plancherel} implies 
\begin{align*}
\|\e^{-\sqrt{1+\delta}|\im\lambda||(x_1,\cdot)|}\E_{x_1}(\lambda) g\|_{L^2(\R^{d-1})}\lesssim |\lambda|^{-\frac{d-1}{2}}\|g\|_{L^2(\R^{d-1})},
\end{align*}
whenever $\supp(\chi)\subset B(0,\frac{1}{4}\delta^{\frac{1}{2}})$, were we used that $|\e^{\I\lambda x_1\psi(\xi')}|\leq \e^{|\im\lambda||x_1|}$. 
Squaring and integrating over $|x_1|\leq R$ yields \eqref{Agmon--Hörmander complex suff. to prove}. 

2. To prove this for $\E_a$ we assume first that $a(x,\xi)$ factors as a product, $a(x,\xi)=b(x)c(\xi)$. Then the previous argument can be used without modification. In the general case, we expand $\chi a$ in a Fourier series,
\begin{align*}
\chi(\xi')a(x,(\psi(\xi'),\xi'))=\sum_{j'\in  \delta^{-\frac{1}{2}}\Z^{d-1}}a_{j'}(x)\e^{\I j'\cdot\xi'}.
\end{align*}
By a scaling argument, we may assume without loss of generality that $\delta=1$. Then, by \eqref{family a uniform bound}, we have $|a_{j'}(x)|\lesssim (1+|j'|)^{-N}$ for any $N>0$. The result for the general case then follows from the factored case and the triangle inequality.
\end{proof}

\begin{remark}
    From the proof of Proposition \ref{lemma Agmon--Hörmander complex} it transpires that if we replace the Euclidean norm $|\cdot|$ by the equivalent norm
    \begin{align*}
        |x|_0:=\max_{i=1,\ldots,d}(|x_i|+|x_i'|),
    \end{align*}
    where $x_i'\in\R^{d-1}$ is the vector $x$ with the $i$-th component omitted, then we have 
    \begin{align*}
        \|\e^{-|\im\lambda||\cdot|_0}\E_a(\lambda)g\|_{_{L^2(B(0,R))}}\lesssim  R^{\frac 12}|\lambda|^{-\frac{d-1}{2}}\|g\|_{L^2(\mathbb{S}^{d-1})}.
    \end{align*}
\end{remark}

\section{Stein--Tomas estimate}\label{sect. Stein--Tomas estimate}
Here we state a generalization of the Stein--Tomas bound \eqref{ST for lambda>0}. 
\begin{proposition}
Let $d\geq 2$, $\lambda\in\C$. Then for each $\delta\in (0,1]$,
\begin{align*}
\|\rho_{\delta}\mathcal{E}_a(\lambda)g\|_{L^{\frac{2(d+1)}{d-1}}(\R^d)}&\lesssim  \delta^{-\frac{(d-1)^2}{2(d+1)}} \lambda^{-\frac{d(d-1)}{2(d+1)}}\|g\|_{L^2(\mathbb{S}^{d-1})},
\end{align*}
uniformly subject to \eqref{family a uniform bound}.
\end{proposition}

We omit the proof since the result is a consequence of the stronger singular value estimate \eqref{eq. singular values 1 variant}. The key ingredient in the proof of the latter will be the following stationary phase estimate.

\begin{lemma}\label{corollary stat. phase}
Let $\lambda\in\C$. The Schwartz kernel $K_{\lambda}$ of $\mathcal{E}_a(\lambda)\mathcal{E}_a(\overline{\lambda})^*$ satisfies
\begin{align}\label{stat. phase I}
|K_{\lambda}(x-y)|\lesssim \e^{|\im\lambda||x-y|}(1+|\lambda||x-y|)^{-\frac{d-1}{2}}
\end{align} 
for $x,y\in\R^d$.
Moreover, the kernel $\widetilde{K}_{\lambda}$ of $\mathcal{E}_a(\lambda)\mathcal{E}_a(\lambda)^*$ satisfies
\begin{align}\label{stat. phase II}
|\widetilde{K}_{\lambda}(x,y)|\lesssim \e^{|\im\lambda||x+y|}(1+|\re\lambda||x-y|+|\im\lambda||x+y|)^{-\frac{d-1}{2}}.
\end{align} 
Both \eqref{stat. phase I}, \eqref{stat. phase II} are uniform over $a$ in the unit ball of $L^{\infty}(\R^d_x;C^N(\mathbb{S}_{\xi}^{d-1}))$.
\end{lemma}

\begin{proof}
1. We start with the proof of \eqref{stat. phase I}.
As in the proof of Theorem \ref{lemma Agmon--Hörmander complex} it suffices to consider the case $a=1$ and to replace $\E(\lambda)$ by \eqref{replacement of E(lambda)}.
Note that $\psi(0)=0$, $\nabla_{\xi'}\psi(0)=0$. The kernel $K_{1,\lambda}(x-y)$ of $\mathcal{E}_1(\lambda)\mathcal{E}_1(\overline{\lambda})^*$ is given by
\begin{align}\label{Appendix A def. K1lambda}
K_{1,\lambda}(z):=\int_{\R^{d-1}}\e^{\I \lambda |z|\phi(\omega,\xi')}\chi(\xi')^2\rd \xi',\quad \omega:=z/|z|\in \mathbb{S}^{d-1},
\end{align}
where $\phi(\omega,\xi'):=\omega\cdot(\psi(\xi'),\xi'))$. For $\lambda>0$, this is essentially the Fourier transform of the surface measure of $\lambda \mathbb{S}^{d-1}$, and the proof of \eqref{stat. phase I} is classical, see e.g.\ \cite[VIII.3.1.1]{MR1232192}. The proof for $\lambda\in\C$ is a simple modification, but we shall provide some details. The phase $\phi$ is stationary at the points $(\pm e_1,0)$, i.e.\ $\nabla_{\xi'}\phi(\pm e_1,0)=0$, where $e_1=(1,0,\ldots,0)$. Since 
\begin{align}\label{nondeg. Hessian}
\det\left[D^2_{\xi'\xi'}\phi(\omega,\xi')\right]=\omega_1^{d-1}\det \left[D^2_{\xi'\xi'}\psi(\xi')\right]\neq 0
\end{align}
at $(\omega,\xi')=(\pm 1,0)$, the implicit function theorem implies that, for $\omega$ in a small neighborhood $\mathcal{N}_{\pm}$ of $\pm e_1$, there is a unique solution $\xi'_{\pm}=\xi'_{\pm}(\omega)$ of the critical point equation $\nabla_{\xi'}\phi(\omega,\xi')=0$ (assuming, as we may, that the support of $\chi_1$ is sufficiently small). Moreover, if $\mathcal{N}_{\pm}$ are sufficiently small, then~\eqref{nondeg. Hessian} still holds at $(\omega,\xi'_{\pm}(\omega))$. 

2. We will only consider the two cases $\omega\in \mathcal{N}_{+}$ and $\omega\in \mathbb{S}^{d-1}\setminus (\mathcal{N}_{+}\cup \mathcal{N}_{-})$. The proof for $\omega\in \mathcal{N}_{-}$ is the same as for $\omega\in \mathcal{N}_{+}$. 
If $\omega\in \mathbb{S}^{d-1}\setminus (\mathcal{N}_{+}\cup \mathcal{N}_{-})$, then $\phi$ has no stationary point (if the support of $\chi$ is chosen sufficiently small) since $\nabla_{\xi'}\phi(\omega,0)=\omega'$ and $|\omega'|\geq c>0$. In this case, integration by parts yields 
\begin{align}\label{Appendix A IBP K1lambda}
|K_{1,\lambda}(z)|\lesssim_N \e^{|\im\lambda||z|}(1+|\lambda||z|)^{-N}
\end{align}
for any $N>0$, where we used that $|\phi(\omega,\xi')|\leq 1$ and thus $\e^{\I\lambda|z|\phi(\omega,\xi')}\leq \e^{|\im\lambda||z|}$. If $\omega\in \mathcal{N}_{+}$, 
we Taylor expand $\phi$ around the critical point,
\begin{align}\label{Appendix A Taylor}
\phi(\omega,\xi')=\phi(\omega,\xi'(\omega))+\frac{1}{2}\langle \eta',D^2_{\xi'\xi'}\phi(\omega,\xi'(\omega))\eta'\rangle+\mathcal{O}(|\eta'|^3),
\end{align}
where $\xi'(\omega)=\xi_+'(\omega)$ and $\eta'=\xi'-\xi'(\omega)$ (note that the prime does not stand for a derivative here). 
Then \eqref{nondeg. Hessian} implies that $|\nabla_{\xi'}\phi(\omega,\xi')|\gtrsim |\eta'|$ on $\supp(\chi_1)$. We split the integral in~\eqref{Appendix A def. K1lambda} into two parts, $K_{1,\lambda}(z)=I+II$, where
\begin{align}\label{Appendix A I}
I:=\int_{\R^{d-1}}\e^{\I \lambda |z|\phi(\omega,\xi')}\chi(\xi')^2\rho(|\lambda z|^{1/2}(\xi'-\xi'(\omega)))\rd \xi',
\end{align}
and $\rho$ is a bump function that localizes near the origin. A simple base times height estimate then shows that $|I|\lesssim \e^{|\im\lambda||z|}|\lambda z|^{-\frac{d-1}{2}}$.
For the second integral
\begin{align}\label{Appendix A II}
II=\int_{\R^{d-1}}\e^{\I \lambda |z|\phi(\omega,\xi')}\chi(\xi')^2(1-\rho(|\lambda z|^{1/2}\eta'))\rd \xi'\quad (\eta'=\xi'-\xi'(\omega)),
\end{align}
integration by parts yields, for any integer $N>d-1$ (similarly as in the proof of \cite[Lemma 4.15]{MR3052498}),
\begin{align*}
|II|\lesssim \e^{|\im\lambda||z|}|\lambda z|^{-N}\int_{|\eta'|>|\lambda z|^{-1/2}}(|\lambda z|^{N/2}|\eta'|^{-N}+|\eta'|^{-2N})\rd\eta'\lesssim \e^{|\im\lambda||z|}|\lambda z|^{-\frac{d-1}{2}}.
\end{align*} 
This proves $|K_{1,\lambda}(z)|\lesssim \e^{|\im\lambda||z|}(1+|\lambda z|)^{-\frac{d-1}{2}}$ for $|\lambda z|\geq 1$. The case $|\lambda z|< 1$ is of course trivial.

3. The proof of \eqref{stat. phase II} is similar.  
Instead of \eqref{Appendix A def. K1lambda}, we need to consider
\begin{align*}
\widetilde{K}_{1,\lambda}(x,y):=\int_{\R^{d-1}}\e^{\I (\lambda x-\overline{\lambda}y)\cdot(\psi(\xi'),\xi')}\chi(\xi')^2\rd \xi'.
\end{align*}
Note, however, that this is no longer a convolution kernel. We can write the phase as $|\lambda x-\overline{\lambda}y|\phi(\omega,\xi')$, with the same function $\phi(\omega,\xi')=\omega\cdot(\psi(\xi'),\xi'))$ as before, and with $\omega:=\frac{\lambda x-\overline{\lambda}y}{|\lambda x-\overline{\lambda}y|}\in \C^d$, $|\omega|=1$. If $|\omega'|\gtrsim 1$, then $|\nabla_{\xi'}\phi(\omega,\xi')|\gtrsim 1$ if the support of $\chi$ is sufficiently small, which we may assume. Integration by parts and the identity
\begin{align}\label{lambda xi-lambdabar eta}
    \lambda x-\overline{\lambda}y=\re\lambda(x-y)+\I\im\lambda(x+y)
\end{align}
then yield
\begin{align*}
|\widetilde{K}_{1,\lambda}(x,y)|\lesssim_N \e^{|\im\lambda||x+y|}(1+|\re\lambda||x-y|+|\im\lambda||x+y|)^{-N}.
\end{align*}
If $|\omega'|\ll 1$, then $|\omega_1|\gtrsim 1$. Since $\psi(\xi')=\sqrt{1-\xi'\cdot\xi'}$ has an analytic continuation to a neighborhood of the origin in $\C^{d-1}$ and $\psi(\xi')=1-\frac{1}{2}\xi'\cdot\xi'+\mathcal{O}(|\xi'|^3)$, there is a unique critical point of $\phi(\omega,\cdot)$ near the origin, given by $\xi(\omega)=\frac{\omega'}{\omega_1}(1+|\frac{\omega'}{\omega_1}|)\in\C^{d-1}$. Since  $D^2_{\xi'\xi'}\phi(\omega,\xi')=-\omega_1^{d-1}I+\mathcal{O}(|\xi'|)$ is nondegenerate there, \eqref{Appendix A Taylor} again implies $|\nabla_{\xi'}\phi(\omega,\xi')|\gtrsim |\xi'-\xi'(\omega)|$. Splitting $\widetilde{K}_{1,\lambda}(z)=I+II$, where now
\begin{align*}
I:=\int_{\R^{d-1}}\e^{\I |\lambda x-\overline{\lambda}y|\phi(\omega,\xi')}\chi(\xi')^2\rho(|\lambda x-\overline{\lambda}y|^{1/2}(\xi'-\re\xi'(\omega)))\rd \xi',
\end{align*} 
and repeating the estimates used for \eqref{Appendix A I}, \eqref{Appendix A II}, we obtain \eqref{stat. phase II}.
\end{proof}

\section{Singular value estimates}\label{sect. singular value estimates}
\subsection{Lorentz space potentials}
Assume that $V\in \e^{-2\gamma|\cdot|}L^{\frac{d+1}{2},\frac{1}{2}}(\R^d)$ for some $\gamma>0$.
Here, $\|\cdot\|_{p,q}$ denotes the quasinorm
\begin{align}\label{Lorentz norm}
\|V\|_{p,q}:=p^{\frac{1}{q}}\big(\int_0^{\infty}\alpha^q|\{x\in\R^d:\,|V(x)|>\alpha\}|^{\frac{q}{p}}\frac{\rd\alpha}{\alpha}\big)^{\frac{1}{q}}
\end{align}
in the Lorentz space $L^{p,q}(\R^d)$, $p,q\in(0,\infty)$. Note that $L^{p,p}(\R^d)=L^p(\R^d)$ and $L^{p,q}(\R^d)\subset L^p(\R^d)$ for $q<p$. 

\begin{proposition}\label{prop. sk(BS) Lp}
Suppose $d\geq 3$ is odd. Then 
\begin{align}\label{singular values Lp im lm>0}
s_k(BS(\lambda))\lesssim  k^{-\frac{1}{d+1}} |\lambda|^{-\frac{2}{d+1}}\|V\|_{(d+1)/2},\quad\im\lambda\geq 0,
\end{align}
and for any $\delta\in (0,1]$, $\gamma\geq \sqrt{1+\delta}|\im\lambda|$,
\begin{align}\label{singular values Lp im lm <0 (used for Schatten membership)}
s_k(BS(\lambda))\lesssim \delta^{-\frac{(d-1)^2}{d+1}} k^{-\frac{1}{d+1}} |\lambda|^{-\frac{2}{d+1}}\|\e^{2\gamma|\cdot|}V\|_{(d+1)/2,1/2},\quad \im\lambda <0.
\end{align}
Moreover, if $\delta,\epsilon\in (0,1]$,  $\gamma\geq \sqrt{1+\delta}|\im\lambda|+\epsilon|\lambda|$, then 
\begin{align}\label{singular values Lp exponential}
s_k(BS(\lambda)-BS(-\lambda))\lesssim |\lambda|^{-\frac{2}{d+1}}\delta^{-\frac{(d-1)^2}{d+1}}\exp\left(-c\epsilon k^{\frac{1}{d-1}}\right)\|\e^{2\gamma|\cdot|}V\|_{(d+1)/2,1/2}
\end{align}
for $\lambda\in\C$.
\end{proposition}

\begin{corollary}\label{cor. BS in weak Schatten}
If $\gamma>0$, $V\in \e^{-2\gamma|\cdot|}L^{(d+1)/2,1/2}(\R^d)$ and $\im\lambda>-\gamma$, then we have that $BS(\lambda)\in\mathfrak{S}^{d+1,\rm w}$. In particular, $BS(\lambda)$ is compact.
\end{corollary}

\begin{proof}
For $\im\lambda\geq 0$ the claim follows from 
\eqref{singular values Lp im lm>0}. If $-\gamma<\im\lambda<0$, then there exists $\delta>0$ such that $\gamma\geq \sqrt{1+\delta}|\im\lambda|$, and the claim follows from \eqref{singular values Lp im lm <0 (used for Schatten membership)}.
\end{proof}

\begin{proof}[Proof of Proposition \ref{prop. meromorphic Lp}]
  In view of the resolvent identity \eqref{resolvent identity} and since $BS(\lambda)$ is compact,
   this follows from a routine application of the meromorphic Fredholm theorem \cite[Thm. C.8]{MR3969938}.
\end{proof}

\begin{lemma}\label{lemma W1W2 bound E(lambda)E(lambdabar)star}
Let $d\geq 2$. Then for all $\lambda\in\C$, $\delta\in (0,1]$,
\begin{align}\label{W1W2 bound E(lambda)E(lambdabar)star}
s_k(W_1\rho_{\delta}\mathcal{E}_a(\lambda)\mathcal{E}_a(\overline{\lambda})^*\rho_{\delta}W_2)\lesssim c_{\delta} k^{-\frac{1}{d+1}} |\lambda|^{-\frac{d(d-1)}{d+1}}\|W_1\|_{d+1,1}\|W_2\|_{d+1,1},
\end{align}
uniformly over $a\in L^{\infty}(\R^d_x;C^N(\mathbb{S}_{\xi}^{d-1}))$ in the unit ball. Here, 
\begin{align}
    c_{\delta}:=\delta^{-\frac{(d-1)^2}{d+1}},\quad \rho_{\delta}(x,\lambda):=\e^{-\sqrt{1+\delta}|\im\lambda||x|}.
\end{align}
\end{lemma}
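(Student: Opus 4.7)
My strategy is to reduce the bilinear Schatten estimate to a one-operator singular-value bound via factorization, and then combine the Stein--Tomas / Frank--Sabin Schatten endpoint with the weighted Agmon--Hörmander operator-norm endpoint of Lemma~\ref{lemma Agmon--Hörmander complex}.

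First I would factor $W_1\rho_{\delta}\mathcal{E}_a(\lambda)\mathcal{E}(\overline{\lambda})^*\rho_{\delta}W_2 = AC$ with
\[
A := W_1 \rho_\delta \mathcal{E}_a(\lambda)\colon L^2(\mathbb{S}^{d-1}) \to L^2(\R^d), \qquad C := \mathcal{E}(\overline{\lambda})^* \rho_\delta W_2\colon L^2(\R^d) \to L^2(\mathbb{S}^{d-1}).
\]
The singular value product inequality $s_{2k-1}(AC) \leq s_k(A) s_k(C)$, together with $s_k(C) = s_k(C^*)$ and the fact that $C^* = \overline{W_2}\,\rho_\delta \mathcal{E}(\overline{\lambda})$ has the same structural form as $A$ (with $W_2$, $b \equiv 1$, $\overline{\lambda}$ in place of $W_1$, $a$, $\lambda$), reduces the claim to the one-operator estimate
\[
s_k\bigl(W \rho_\delta \mathcal{E}_b(\lambda)\bigr) \lesssim c_\delta^{1/2}\, k^{-\frac{1}{2(d+1)}}\, |\lambda|^{-\frac{d(d-1)}{2(d+1)}}\, \|W\|_{d+1,1},
\]
uniformly for $b$ in the unit ball of $L^\infty(\R^d_x; C^N(\mathbb{S}_\xi^{d-1}))$.

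For this one-operator bound I would combine two endpoint estimates. Endpoint~(i) is a Schatten Stein--Tomas bound, $\|W \rho_\delta \mathcal{E}_b(\lambda)\|_{\mathfrak{S}^{2(d+1)}} \lesssim |\lambda|^{-d(d-1)/(2(d+1))} \|W\|_{d+1}$. For $\im\lambda \geq 0$ this is immediate from \eqref{Frank-Sabin s_k extension} since $\rho_\delta \leq 1$; for $\im\lambda < 0$ the weight is essential, and I would proceed through a kernel-level analysis of the weighted oscillatory integral $\rho_\delta(x) \rho_\delta(y) \int_{\mathbb{S}^{d-1}} \e^{\I \lambda (x-y)\cdot\xi} b(x,\xi)\,\rd S(\xi)$ using the complex stationary phase estimates of Appendix~A, with the slack $\sqrt{1+\delta} > 1$ built into $\rho_\delta$ providing the necessary integrability. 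Endpoint~(ii) is Lemma~\ref{lemma Agmon--Hörmander complex} itself, giving $\|W \rho_\delta \mathcal{E}_b(\lambda)\|_{L^2 \to L^2} \lesssim R^{1/2} |\delta\lambda|^{-(d-1)/2} \|W\|_\infty$ for $\supp W \subset B(0,R)$. Expanding $W$ via the Lorentz atomic decomposition $W = \sum_n 2^n \mathbf{1}_{E_n}$, with $\sum_n 2^n |E_n|^{1/(d+1)} \sim \|W\|_{d+1,1}$, and each atom further split dyadically in $|x|$, I would apply (i) to the concentrated pieces and (ii) to the spread pieces, summing via $s_{k+\ell-1}(S+T) \leq s_k(S) + s_\ell(T)$. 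The cutoff between the two regimes is chosen so as to balance the $|\delta\lambda|^{-(d-1)/2}$ loss of~(ii) against the clean~(i), which produces the stated $c_\delta^{1/2} = \delta^{-(d-1)^2/(2(d+1))}$.

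The hard part will be pushing the Schatten endpoint~(i) into the regime $\im\lambda < 0$ with the sharp $\delta$-dependence: this is precisely where the complex stationary phase machinery of Appendix~A is indispensable, and the careful tracking of the interpolation exponent to yield $c_\delta^{1/2}$ rather than a cruder power of $\delta$ is the delicate bookkeeping step.
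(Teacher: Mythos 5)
Your factorization step is fine: $s_{2k}(AC)\leq s_k(A)s_k(C)$ with $A=W_1\rho_\delta\mathcal E_a(\lambda)$ and $C^*=\overline{W_2}\rho_\delta\mathcal E(\overline\lambda)$ does reduce the lemma to the one-operator bound $s_k(W\rho_\delta\mathcal E_b(\lambda))\lesssim c_\delta^{1/2}k^{-\frac{1}{2(d+1)}}|\lambda|^{-\frac{d(d-1)}{2(d+1)}}\|W\|_{d+1,1}$ (this is exactly Lemma~\rref{lemma singular values 1 variant} of the paper). But this only relocates the difficulty, and your proposed proof of the one-operator bound has a genuine gap at its core. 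Your endpoint~(i) for $\im\lambda<0$ --- a strong-type bound $\|W\rho_\delta\mathcal E_b(\lambda)\|_{\mathfrak S^{2(d+1)}}\lesssim|\lambda|^{-d(d-1)/(2(d+1))}\|W\|_{d+1}$ with the near-optimal weight $\rho_\delta$ --- is not something a ``kernel-level analysis'' can deliver: the pointwise bounds of Appendix~A on $K_\lambda$ control at best the Hilbert--Schmidt norm ($\mathfrak S^2$), and passing from a pointwise kernel bound to $\mathfrak S^{2(d+1)}$ requires genuine orthogonality input (in \cite{MR3556444} this is done by complex interpolation of an analytic family, and even then only with the much larger weight exponent $\beta_d/(d+1)>\sqrt{1+\delta}$). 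Indeed, the strong-type endpoint you posit is precisely the ``endpoint version'' that the paper's Remark~1.6 says is \emph{not} obtained by these methods; its absence is the reason the lemma is stated with the Lorentz norm $\|\cdot\|_{d+1,1}$ rather than $\|\cdot\|_{d+1}$. Your plan is also internally inconsistent: if endpoint~(i) held as stated, the lemma (in a stronger, Lorentz-free form) would follow immediately from the factorization alone, making endpoint~(ii) and the atomic-decomposition interpolation superfluous.

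What the paper actually does is a restricted weak-type argument that never needs a strong Schatten endpoint in the lower half-plane. After scaling to $|\lambda|=1$, the layer-cake representation of $W_1,W_2$ and the triangle inequality for $\|\cdot\|_{\mathfrak S^{d+1,\rm w}}$ reduce \eqref{W1W2 bound E(lambda)E(lambdabar)star} to the bound \eqref{FLS extension sj} for characteristic functions $\mathbf 1_{\Omega_1},\mathbf 1_{\Omega_2}$ (this is where the $L^{d+1,1}$ norm enters). One then tiles $\R^d$ by cubes of side $R$ and splits the operator into a near-diagonal part $\Sigma_\le$ and a far part $\Sigma_>$: the operator norm of $\Sigma_\le$ is controlled by $R\delta^{-(d-1)}$ via the weighted Agmon--H\"ormander bound \eqref{Agmon--Hörmander complex}, while for $\Sigma_>$ the stationary-phase bound \eqref{stat. phase I} gives $|\Sigma_>(x-y)|\lesssim R^{-(d-1)/2}$ (the weight $\rho_\delta(x)\rho_\delta(y)$ exactly absorbs the exponential growth $\e^{|\im\lambda||x-y|}$ of the kernel), whence a Hilbert--Schmidt estimate and Markov's inequality yield $s_k(\mathbf 1_{\Omega_1}\Sigma_>\mathbf 1_{\Omega_2})\lesssim k^{-1/2}R^{-(d-1)/2}|\Omega_1|^{1/2}|\Omega_2|^{1/2}$. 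Optimizing over $R$ produces both the exponent $k^{-1/(d+1)}$ and the constant $c_\delta$. If you want to salvage your factorization route, you would still have to prove the one-operator bound by essentially this cube decomposition (as the paper does in Lemma~\rref{lemma singular values 1 variant} for $\lambda\notin\R$), so nothing is gained.
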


\begin{proof}
1. By scaling, we may assume that $|\lambda|=1$. 
Then \eqref{W1W2 bound E(lambda)E(lambdabar)star} would follow from the restricted weak type inequality
\begin{align}\label{FLS extension sj}
s_k(1_{\Omega_1}\rho_{\delta}\mathcal{E}_a(\lambda)\mathcal{E}_a(\overline{\lambda})^*\rho_{\delta}1_{\Omega_2})\lesssim c_{\delta}k^{-\frac{1}{d+1}}|\Omega_1|^{\frac{1}{d+1}}|\Omega_2|^{\frac{1}{d+1}},
\end{align}
where 
$\Omega_1,\Omega_2\subset\R^d$ are measurable sets. To see that \eqref{FLS extension sj} implies \eqref{W1W2 bound E(lambda)E(lambdabar)star}, one observes that, by the layer cake representation of $|W_1|,|W_2|$,
\begin{align}
    |W_j|=\int_0^{\infty}\mathbf{1}\{|W_j|>\alpha_j\}\rd \alpha_j,
\end{align}

the triangle inequality for the weak Schatten norm $\|\cdot\|_{\mathfrak{S}^{d+1,\rm w}}$ (see Section \ref{sect. Preliminaries} for the definition and the discussion that this is a norm) and the definition of the Lorentz norm $\|\cdot\|_{d+1,1}$, 
\begin{align*}
&\|W_1\rho_{\delta}\mathcal{E}_a(\lambda)\mathcal{E}_a(\overline{\lambda})^*\rho_{\delta}W_2\|_{\mathfrak{S}^{d+1,\rm w}}\\
&\leq\int_0^{\infty}\int_0^{\infty}\|\mathbf{1}\{|W_1|>\alpha_1\}\rho_{\delta}\mathcal{E}_a(\lambda)\mathcal{E}_a(\overline{\lambda})^*\rho_{\delta}\mathbf{1}\{|W_2|>\alpha_2\}\|_{\mathfrak{S}^{d+1,\rm w}}\rd\alpha_1\rd\alpha_2\\
&\lesssim c_{\delta} k^{-\frac{1}{d+1}}\int_0^{\infty}\int_0^{\infty} |\{|W_1|>\alpha_1\}|^{\frac{1}{d+1}}|\{|W_2|>\alpha_2\}|^{\frac{1}{d+1}}\rd\alpha_1\rd\alpha_2\\
&=c_{\delta}k^{-\frac{1}{d+1}}\|W_1\|_{d+1,1}\|W_2\|_{d+1,1}.
\end{align*}
Then \eqref{W1W2 bound E(lambda)E(lambdabar)star} follows from the equivalence of $\|\cdot\|_{\mathfrak{S}^{d+1,\rm w}}$ and $\|\cdot\|_{\mathfrak{S}^{d+1,\rm w}}'$

2.
Let $\{Q_{\alpha}\}_{\alpha\in \Z^d}$ be a partition of $\R^d$ into cubes of sidelength $R$ and denote by $\chi_{\alpha}$ the corresponding characteristic functions. Let 
\begin{align}
\Sigma_>&:=\sum_{|\alpha-\beta|>1}\chi_{\alpha}\rho_{\delta}\mathcal{E}_a(\lambda)\mathcal{E}_a(\overline{\lambda})^*\rho_{\delta}\chi_{\beta},\label{def. Sigma>}\\
\Sigma_{\leq}&:=\sum_{|\alpha-\beta|\leq 1}\chi_{\alpha}\rho_{\delta}\mathcal{E}_a(\lambda)\mathcal{E}_a(\overline{\lambda})^*\rho_{\delta}\chi_{\beta}.\label{def. Sigma_leq}
\end{align}
Then we have
\begin{align}\label{Ky Fan}
s_k(1_{\Omega_1}\rho_{\delta}\mathcal{E}_a(\lambda)\mathcal{E}_a(\overline{\lambda})^*\rho_{\delta}1_{\Omega_2})\leq s_k(1_{\Omega_1}\Sigma_>1_{\Omega_2})+\|1_{\Omega_1}\Sigma_{\leq}1_{\Omega_2}\|.
\end{align}
To estimate the second term we use Proposition \ref{lemma Agmon--Hörmander complex} (observing that the assumptions and the right hand side of \eqref{Agmon--Hörmander complex} are symmetric under complex conjugation), which gives
\begin{align}\label{Estimate Sigma2}
\|\chi_{\alpha}\rho_{\delta}\mathcal{E}_a(\lambda)\|,\|\mathcal{E}_a(\overline{\lambda})^*\rho_{\delta}\chi_{\beta}\|\lesssim \delta^{-\frac{d-1}{2}} R^{\frac{1}{2}}.
\end{align}
Combining \eqref{Estimate Sigma2} with the triangle inequality and Cauchy--Schwarz, we have that for any $f,g\in L^2(\R^d)$,
\begin{align*}
|\langle f, \Sigma_{\leq}g\rangle|&\leq \sum_{|\alpha-\beta|\leq 1}\|\mathcal{E}_a(\lambda)^*\rho_{\delta}\chi_{\alpha}f\|\|\mathcal{E}_a(\overline{\lambda})^*\rho_{\delta}\chi_{\beta}g\|\\
&\lesssim(\sum_{\alpha}\|\mathcal{E}_a(\lambda)^*\rho_{\delta}\chi_{\alpha}f\|^2)^{1/2}(\sum_{\alpha}\|\mathcal{E}_a(\overline{\lambda})^*\rho_{\delta}\chi_{\alpha}g\|^2)^{1/2}\\
&\lesssim R\delta^{-(d-1)}(\sum_{\alpha}\|\chi_{\alpha}f\|^2)^{1/2}(\sum_{\alpha}\|\chi_{\alpha}g\|^2)^{1/2}=R\delta^{-(d-1)}\|f\|\|g\|,
\end{align*}
which implies
\begin{align}\label{bound Sigma_2}
\|\Sigma_{\leq}\|\lesssim R\delta^{-(d-1)}.
\end{align}

3.
To estimate $s_k(1_{\Omega_1}\Sigma_>1_{\Omega_2})$ we use Markov's inequality
\begin{align}\label{Markov}
|\{s_k(1_{\Omega_1}\Sigma_>1_{\Omega_2})>\alpha\}|\leq \alpha^{-2}\sum_{n\in\N}s_n(1_{\Omega_1}\Sigma_>1_{\Omega_2})^2=\alpha^{-2}\|1_{\Omega_1}\Sigma_>1_{\Omega_2}\|_{\mathfrak{S}^2}^2.
\end{align}
The Hilbert--Schmidt norm is estimated using \eqref{stat. phase I},
which yields
\begin{align}\label{HS norm Sigma<}
|\Sigma_>(x-y)|\lesssim R^{-\frac{d-1}{2}}.
\end{align}
Here we used that $\e^{|\im\lambda|(|x-y|-\sqrt{1+\delta}(|x|+|y|))}\leq 1$ and $|x-y|>R$ on the support of $\Sigma_>$.
Thus we have
\begin{align*}
\|1_{\Omega_1}\Sigma_>1_{\Omega_2}\|_{\mathfrak{S}^2}^2
\lesssim R^{-(d-1)}
\int_{\Omega_2}\int_{\Omega_1}\big(\sum_{|\alpha-\beta|>1}\chi_{\alpha}(x)\chi_{\beta}(y)\big)^2\rd x\rd y.
\end{align*}
Since the cubes are almost disjoint (they intersect at most in a set of measure zero), we have
\begin{align}\label{Sigma_1 HS norm}
\|1_{\Omega_1}\Sigma_>1_{\Omega_2}\|_{\mathfrak{S}^2}^2\lesssim R^{-(d-1)}\sum_{\alpha,\beta}|\Omega_1\cap Q_{\alpha}||\Omega_2\cap Q_{\beta}|\leq R^{-(d-1)}|\Omega_1||\Omega_2|.
\end{align}

4.
Together, \eqref{Markov} and \eqref{Sigma_1 HS norm} imply
\begin{align}\label{bound Sigma_1}
s_k(1_{\Omega_1}\Sigma_>1_{\Omega_2})\lesssim k^{-\frac{1}{2}}R^{-\frac{d-1}{2}}|\Omega_1|^{\frac{1}{2}}|\Omega_2|^{\frac{1}{2}}.
\end{align}
Combining \eqref{Ky Fan}, \eqref{bound Sigma_2}, \eqref{bound Sigma_1}, we get
\begin{align*}
s_k(1_{\Omega_1}\rho_{\delta}\mathcal{E}_a(\lambda)\mathcal{E}_a(\overline{\lambda})^*\rho_{\delta}1_{\Omega_2})\lesssim \delta^{-(d-1)} R+k^{-\frac{1}{2}}R^{-\frac{d-1}{2}}|\Omega_1|^{\frac{1}{2}}|\Omega_2|^{\frac{1}{2}}.
\end{align*}
Optimizing over $R$, i.e. taking $R=\delta^{\frac{2(d-1)}{d+1}} k^{-\frac{1}{d+1}}|\Omega_1|^{\frac{1}{d+1}}|\Omega_2|^{\frac{1}{d+1}}$, yields  \eqref{FLS extension sj}. 
\end{proof}

We will also need the following variant of \eqref{W1W2 bound E(lambda)E(lambdabar)star}.

\begin{lemma}\label{lemma singular values 1 variant}
Suppose $d\geq 2$. Then for all $\lambda\in\C$, $\delta\in (0,1]$,
\begin{align}\label{eq. singular values 1 variant}
s_k(W\rho_{\delta}\mathcal{E}_a(\lambda))\lesssim c_{\delta}^{1/2} k^{-\frac{1}{2(d+1)}} |\lambda|^{-\frac{d(d-1)}{2(d+1)}}\|W\|_{d+1,1},
\end{align}
uniformly over $a$ in the unit ball of $L^{\infty}(\R^d_x;C^N(\mathbb{S}_{\xi}^{d-1}))$.
\end{lemma}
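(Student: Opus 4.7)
The plan is to reduce the estimate to the squared version that has already been proven. Set $T := W\rho_\delta \mathcal{E}_a(\lambda)$. Using the polar decomposition identity $s_k(T)^2 = s_k(TT^*)$, the bound \eqref{eq. singular values 1 variant} is equivalent to
\[
s_k(TT^*) = s_k\bigl(W\rho_\delta \mathcal{E}_a(\lambda)\mathcal{E}_a(\lambda)^*\rho_\delta \overline{W}\bigr)\lesssim c_\delta k^{-\frac{1}{d+1}}|\lambda|^{-\frac{d(d-1)}{d+1}}\|W\|_{d+1,1}^2,
\]
after which taking square roots (together with $\|\overline W\|_{d+1,1}=\|W\|_{d+1,1}$) delivers the claim with all exponents matching up.

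The displayed bound on $s_k(TT^*)$ is nearly the statement of Lemma \ref{lemma W1W2 bound E(lambda)E(lambdabar)star}; the only discrepancy is that $\mathcal{E}_a(\lambda)^*$ replaces $\mathcal{E}_a(\overline{\lambda})^*$. I would simply rerun the proof of that lemma in this slightly modified setting. The partition of unity, the splitting $TT^* = \Sigma_> + \Sigma_\leq$ as in \eqref{def. Sigma>}--\eqref{def. Sigma_leq}, the reduction to indicator weights $\mathbf{1}_{\Omega_j}$ via layer cake, and the final optimization $R\sim \delta^{\frac{2(d-1)}{d+1}} k^{-\frac{1}{d+1}}|\Omega_1|^{\frac{1}{d+1}}|\Omega_2|^{\frac{1}{d+1}}$ are all insensitive to this change. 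The diagonal bound $\|\Sigma_\leq\|\lesssim R\delta^{-(d-1)}$ is unchanged, since it only uses the Agmon--Hörmander estimate of Lemma \ref{lemma Agmon--Hörmander complex} applied to $\chi_\alpha\rho_\delta\mathcal{E}_a(\lambda)$ and, by duality, to $\mathcal{E}_a(\lambda)^*\rho_\delta\chi_\alpha$.

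The one point requiring verification is the off-diagonal kernel estimate corresponding to \eqref{HS norm Sigma<}. The kernel of $\mathcal{E}_a(\lambda)\mathcal{E}_a(\lambda)^*$ reads
\[
K(x,y) = \int_{\mathbb{S}^{d-1}} e^{i\lambda x\cdot \xi - i\overline{\lambda}y\cdot\xi}\, a(x,\xi)\overline{a(y,\xi)}\, dS(\xi),
\]
whose real stationary phase is $(\re\lambda)(x-y)\cdot\xi$, and whose complex amplitude is bounded by $e^{|\im\lambda|(|x|+|y|)}$ since $|e^{-(\im\lambda)(x+y)\cdot\xi}|\leq e^{|\im\lambda||x+y|}\leq e^{|\im\lambda|(|x|+|y|)}$. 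The stationary phase estimate of Appendix A then yields $|K(x,y)|\lesssim e^{|\im\lambda|(|x|+|y|)}|\lambda|^{-(d-1)/2}|x-y|^{-(d-1)/2}$ on $\Sigma_>$, and the two $\rho_\delta$ factors absorb the exponential growth via $e^{-\sqrt{1+\delta}|\im\lambda|(|x|+|y|)}e^{|\im\lambda|(|x|+|y|)}\leq 1$, producing the same $R^{-(d-1)/2}$ decay as in \eqref{HS norm Sigma<}.

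The main (minor) obstacle is precisely this last verification: in Lemma \ref{lemma W1W2 bound E(lambda)E(lambdabar)star} the analogous identity used is $|x-y|\leq |x|+|y|$ on the off-diagonal, whereas here the corresponding role is played by $|x+y|\leq|x|+|y|$. Both are trivial, so no genuinely new analytic input is required; with these two ingredients the rest of the proof of Lemma \ref{lemma W1W2 bound E(lambda)E(lambdabar)star} (Markov's inequality on the Hilbert--Schmidt norm of $\mathbf{1}_{\Omega_1}\Sigma_>\mathbf{1}_{\Omega_2}$, combination with the Ky Fan inequality \eqref{Ky Fan}, and optimization over $R$) transfers without modification, and taking the square root concludes the argument.
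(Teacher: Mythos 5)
Your $TT^*$ reduction and the treatment of the exponential factor are fine, but there is a genuine gap in the off-diagonal kernel estimate. You claim that \eqref{stat. phase II} yields $|K(x,y)|\lesssim \e^{|\im\lambda|(|x|+|y|)}|\lambda|^{-\frac{d-1}{2}}|x-y|^{-\frac{d-1}{2}}$ on the support of $\Sigma_>$. It does not: the decay factor in \eqref{stat. phase II} is $(1+|\re\lambda||x-y|+|\im\lambda||x+y|)^{-\frac{d-1}{2}}$, and this controls $(1+|\lambda||x-y|)^{-\frac{d-1}{2}}$ only when $|\re\lambda|\gtrsim|\lambda|$. When $\lambda$ is close to purely imaginary the phase $\e^{\I\re\lambda(x-y)\cdot\xi}$ carries essentially no oscillation and the kernel of $\mathcal{E}_a(\lambda)\mathcal{E}_a(\lambda)^*$ need not decay in $|x-y|$ at all: for $\lambda=\I$ and $y=-x$ the kernel is $\int_{\mathbb{S}^{d-1}}a(x,\xi)\overline{a(y,\xi)}\,\rd S(\xi)=\mathcal{O}(1)$ even though $|x-y|=2|x|$ can be arbitrarily large. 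So the bound \eqref{HS norm Sigma<} fails for the decomposition \eqref{def. Sigma>}--\eqref{def. Sigma_leq} in this regime, and your Hilbert--Schmidt/Markov step breaks down. The remark at the end of your proposal about $|x+y|\le|x|+|y|$ only addresses the exponential growth factor, not the polynomial decay, and these are distinct issues.

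The fix (which is what the paper does) is to split into the cases $|\re\lambda|\gtrsim|\lambda|$ and $|\im\lambda|\gtrsim|\lambda|$ after scaling to $|\lambda|=1$. In the first case your argument goes through verbatim, since then $|\re\lambda||x-y|\gtrsim|x-y|$. In the second case one must change the near/far decomposition: define $\Sigma_>$ and $\Sigma_{\le}$ by summing over $|\alpha+\beta|>1$ and $|\alpha+\beta|\le 1$ respectively, so that on the support of $\Sigma_>$ one has $|x+y|\gtrsim R$ and \eqref{stat. phase II} gives the needed $R^{-\frac{d-1}{2}}$ via the $|\im\lambda||x+y|$ term. The bound \eqref{bound Sigma_2} for the modified $\Sigma_{\le}$ still holds because each $\alpha$ pairs with $\mathcal{O}(1)$ indices $\beta$ and only the first bound in \eqref{Estimate Sigma2} is used. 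With this case distinction the rest of your argument (layer cake, Ky Fan, Markov, optimization in $R$, and the final square root) is correct and matches the paper.
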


\begin{proof}
If $\lambda\in\R$, then \eqref{eq. singular values 1 variant} follows from \eqref{singular values Lp im lm>0} by a $TT^*$ argument. If $\lambda\notin\R$, we distinguish the cases $|\re\lambda|\gtrsim |\lambda|$ and $|\re\lambda|\ll |\lambda|$. By scaling, we may again assume $|\lambda|=1$, so that we consider the cases $|\re\lambda|\gtrsim 1$ and $|\im\lambda|\gtrsim 1$. It suffices to prove
\begin{align*}
s_k(1_{\Omega_1}\rho_{\delta}\mathcal{E}_a(\lambda)\mathcal{E}_a(\lambda)^*\rho_{\delta}1_{\Omega_2})\lesssim c_{\delta}|\Omega_1|^{\frac{1}{d+1}}|\Omega_2|^{\frac{1}{d+1}},
\end{align*}
This is the same as \eqref{FLS extension sj}, but without the complex conjugate. If $|\re\lambda|\gtrsim 1$, the proof is exactly the same as that of Lemma \ref{lemma W1W2 bound E(lambda)E(lambdabar)star}, except that we now use \eqref{stat. phase II} to get \eqref{HS norm Sigma<}. If If $|\im\lambda|\gtrsim 1$, we replace $|\alpha-\beta|$ by $|\alpha+\beta|$ in the definition of $\Sigma_>,\Sigma_{\leq}$. Then \eqref{stat. phase II} again yields \eqref{HS norm Sigma<}, and the estimate \eqref{bound Sigma_2} for $\Sigma_{\leq}$ is unchanged, due to the first bound in \eqref{Estimate Sigma2}.
\end{proof}

\begin{lemma}\label{lemma singular values WE exponential decay in sk}
Suppose $d\geq 2$. Then there exists $c>0$ such that for all $\lambda\in\C$ and $\delta,\epsilon\in (0,1]$,
\begin{align}\label{singular values complex extension}
s_k(W\rho_{\delta,\epsilon}\mathcal{E}(\lambda))\lesssim c_{\delta}^{1/2} \exp\left(-c\epsilon k^{\frac{1}{d-1}}\right)|\lambda|^{-\frac{d(d-1)}{2(d+1)}}\|W\|_{d+1,1},
\end{align}
where 
\begin{align}
    \rho_{\delta,\epsilon}(x,\lambda):=\e^{-(\sqrt{1+\delta}|\im\lambda|+\epsilon|\lambda|)|x|}.
\end{align}
\end{lemma}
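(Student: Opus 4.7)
The target bound strengthens Lemma \ref{lemma singular values 1 variant} by trading the polynomial factor $k^{-1/(2(d+1))}$ for the exponential $\exp(-c\epsilon k^{1/(d-1)})$; the improvement is enabled by the extra weight $\e^{-\epsilon|\lambda||x|}$ present in $\rho_{\delta,\epsilon}$ but not in $\rho_\delta$. The plan is to approximate $\E(\lambda)$ by a finite-rank operator obtained by truncating the spherical-harmonic expansion at some degree $N$ chosen so that the rank is comparable to $k$, and then to control the tail using the decay of Bessel functions of high order together with the extra exponential weight.

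Let $P_{<N}$ denote the orthogonal projection in $L^2(\mathbb{S}^{d-1})$ onto spherical harmonics of degree less than $N$, so that $\rank(P_{<N})\asymp N^{d-1}$. Setting $T:=W\rho_{\delta,\epsilon}\E(\lambda)$, the min-max principle gives $s_{CN^{d-1}+1}(T)\le\|T(I-P_{<N})\|_{L^2(\mathbb{S}^{d-1})\to L^2(\R^d)}$, so choosing $N\asymp k^{1/(d-1)}$ reduces the lemma to proving
\begin{align*}
\|T(I-P_{<N})\|\lesssim c_\delta^{1/2}|\lambda|^{-d(d-1)/(2(d+1))}\e^{-c\epsilon N}\|W\|_{d+1,1}.
\end{align*}

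For the tail estimate I would use the Funk--Hecke identity $\E(\lambda)Y_{n,j}(x)=c_d\,\I^n\,(\lambda|x|)^{-(d-2)/2}J_{n+(d-2)/2}(\lambda|x|)Y_{n,j}(\hat x)$, the addition formula $\sum_j|Y_{n,j}(\hat x)|^2\asymp n^{d-2}$, and the classical Bessel bounds: $|J_\nu(z)|\lesssim(\e|z|/(2\nu))^\nu\e^{|\im z|}$ for $\nu>\e|z|$, and $|J_\nu(z)|\lesssim|z|^{-1/2}\e^{|\im z|}$ otherwise. The growing factor $\e^{|\im\lambda||x|}$ is absorbed by the $\rho_{\delta,\epsilon}$ weight via $\sqrt{1+\delta}\ge 1$, leaving the factor $\e^{-\epsilon|\lambda||x|}$ free to balance the Bessel asymptotics. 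Splitting the $x$-integral at $|x|=N/(2\e|\lambda|)$, the Bessel factor produces super-geometric decay in $N$ on the inner region while the weight produces decay $\e^{-cN}$ on the outer region, and together they yield the $\e^{-c\epsilon N}$ rate.

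The remaining---and most delicate---step is to transfer this operator-norm estimate to the Lorentz norm $\|W\|_{d+1,1}$. I would use the $TT^*$ reduction $s_k(T)^2=s_k(TT^*)$ and apply the cube-partition / Ky Fan strategy of Lemma \ref{lemma W1W2 bound E(lambda)E(lambdabar)star} to $WTT^*\bar W$, inserting the explicit representation of the kernel of $\E(\lambda)(I-P_{<N})\E(\lambda)^*$ supplied by the ``structure formula'' of Appendix B. That kernel simultaneously carries the polynomial stationary-phase decay in $|x-y|$ (from the proof of Lemma \ref{lemma W1W2 bound E(lambda)E(lambdabar)star}) and the exponential factor $\e^{-c\epsilon N}$ (from the previous paragraph), together with the additional off-diagonal factor $\e^{-\epsilon|\lambda||x-y|}\le\e^{-\epsilon|\lambda|R}$ that follows from $|x|+|y|\ge|x-y|$ and the weight in $\rho_{\delta,\epsilon}$. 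Running the optimization scheme of Lemma \ref{lemma W1W2 bound E(lambda)E(lambdabar)star} with this enhanced kernel should then produce a restricted-weak-type bound of the form $s_k(\mathbf{1}_{\Omega_1}\rho_{\delta,\epsilon}\E(\lambda)\E(\lambda)^*\rho_{\delta,\epsilon}\mathbf{1}_{\Omega_2})\lesssim c_\delta\exp(-c\epsilon k^{1/(d-1)})|\lambda|^{-d(d-1)/(d+1)}|\Omega_1|^{1/(d+1)}|\Omega_2|^{1/(d+1)}$. The layer-cake decomposition of $W$ together with the triangle inequality for the weak Schatten quasi-norm, exactly as in Lemma \ref{lemma W1W2 bound E(lambda)E(lambdabar)star}, then delivers the general statement. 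The main obstacle will be that the structure formula must deliver a kernel representation that is compatible with the cube-partition scheme at the optimal scale $R$, so that after joint optimization in $R$ and $N$ the final exponent on $|\Omega_j|$ stays at $1/(d+1)$ while the exponential rate $\e^{-c\epsilon k^{1/(d-1)}}$ is preserved.
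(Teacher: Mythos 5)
Your overall strategy --- approximating $\E(\lambda)$ by a finite-rank operator via truncation of the spherical-harmonic expansion at degree $N\asymp k^{\frac{1}{d-1}}$, and controlling the tail through the interplay of Bessel decay below the turning point with the extra weight $\e^{-\epsilon|\lambda||x|}$ --- is genuinely different from the paper's. The paper instead writes $s_k(W\rho_{\delta,\epsilon}\E(\lambda))\le\|W\rho_{\delta,\epsilon}\E(\lambda)(I-\epsilon^2\Delta_S)^l\|\,s_k((I-\epsilon^2\Delta_S)^{-l})$, uses Weyl asymptotics to bound $s_k((I-\epsilon^2\Delta_S)^{-l})\le(C\epsilon k^{\frac{1}{d-1}})^{-2l}$, and then invokes the structure formula $\E(\lambda)(I-\epsilon^2\Delta_S)^l=\E_{a_l}(\lambda)$ of Appendix~\ref{Appendix Proof of the structure formula}, whose symbol bound $C^{2l}(2l)!\exp(\epsilon|\lambda x|)$ lets the growth $\exp(\epsilon|\lambda x|)$ be absorbed by $\rho_{\delta,\epsilon}$, so that Lemma~\ref{lemma singular values 1 variant} (with $k=1$) --- proved deliberately \emph{uniformly} over symbols in the unit ball of $L^{\infty}(\R^d_x;C^N(\mathbb{S}^{d-1}_{\xi}))$ --- applies verbatim; optimizing $(2l)^{2l}M^{-2l}$ over $l$ then produces the exponential. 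The point of that design is that no new kernel or restricted-weak-type estimate is needed.

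The decisive step of your proposal is exactly the one you flag as delicate, and as written it does not go through. The structure formula of Appendix~\ref{Appendix Proof of the structure formula} concerns powers of $(I-\epsilon^2\Delta_S)$ applied to $\e^{\I\lambda x\cdot\xi}$; it supplies no kernel representation of $\E(\lambda)(I-P_{<N})\E(\lambda)^*$, and re-running the cube-partition/Ky~Fan scheme of Lemma~\ref{lemma W1W2 bound E(lambda)E(lambdabar)star} on the truncated kernel would require new stationary-phase and Agmon--H\"ormander estimates for the partial sum $\sum_{n\ge N}$ that you neither state nor prove. Fortunately your route can be closed much more cheaply, because $I-P_{<N}$ is a contraction of $L^2(\mathbb{S}^{d-1})$ sitting to the \emph{right} of $\E(\lambda)$: split $\R^d_x$ at $|\lambda||x|=N/(2\e)$. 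On the outer region the pointwise bound $\mathbf{1}_{\{|\lambda||x|\ge N/(2\e)\}}\rho_{\delta,\epsilon}\le \e^{-\epsilon N/(2\e)}\rho_{\delta}$ and $\|I-P_{<N}\|\le 1$ reduce matters to Lemma~\ref{lemma singular values 1 variant} applied to $W\rho_{\delta}\E(\lambda)$, which already carries the full Lorentz dependence; this is where the factor $\e^{-c\epsilon N}$ comes from. On the inner region, Cauchy--Schwarz and your Bessel bound give $|\E(\lambda)(I-P_{<N})g(x)|\lesssim 4^{-N}\e^{|\im\lambda||x|}\|g\|_{L^2(\mathbb{S}^{d-1})}$, and H\"older in Lorentz spaces, $\|W\mathbf{1}_B\|_2\lesssim\|W\|_{d+1,1}|B|^{\frac{d-1}{2(d+1)}}$ with $|B|\asymp(N/|\lambda|)^d$, recovers exactly $|\lambda|^{-\frac{d(d-1)}{2(d+1)}}$ up to a polynomial in $N$ absorbed by $4^{-N}$. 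With that repair your argument becomes a valid alternative proof; as submitted, the transfer to the $\|W\|_{d+1,1}$ norm is missing.
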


\begin{proof}
For any $l\in\N$ we have
\begin{align}\label{trick sphere Laplacian}
s_k(W\rho_{\delta,\epsilon}\mathcal{E}(\lambda))\leq \| W\rho_{\delta,\epsilon}\mathcal{E}(\lambda)(I-\epsilon^2\Delta_S)^l\|s_k((I-\epsilon^2\Delta_S)^{-l})
\end{align}
where $\Delta_S$ is the Laplace--Beltrami operator on the sphere $S=\mathbb{S}^{d-1}$. By Weyl's asymptotics, 
\begin{align}\label{Weyl's asymptotics}
s_k((I-\epsilon^2\Delta_S)^{-l})\leq (C_w\epsilon k^{\frac{1}{d-1}})^{-2l}. 
\end{align} 
To estimate the operator norm in \eqref{trick sphere Laplacian} we use the \emph{structure formula}
\begin{align}\label{structure formula}
\mathcal{E}(\lambda)(I-\epsilon^2\Delta_S)^l=\mathcal{E}_{a_l}(\lambda),
\end{align}
where $a_l$ satisfies 
\begin{align}\label{al bound structure formula}
\sum_{|\beta|\leq N}|\partial_{\xi}^{\beta}a_l(x,\xi)|\leq C_{N,d}^{2l}(2l)!\exp(\epsilon|\lambda x|),\quad x\in\R^d,\quad \xi\in S
\end{align}
for any $N\in \N_0$. The proof of \eqref{structure formula}, \eqref{al bound structure formula} is postponed to Section \ref{Appendix Proof of the structure formula}. 
Lemma~\ref{lemma singular values 1 variant} (with $k=1$) then implies
\begin{align}\label{FLS extension al}
\| W\rho_{\delta,\epsilon}\mathcal{E}_{a_l}(\lambda)\|\lesssim c_{\delta}^{1/2} C_{N,d}^{2l}(2l)^{2l}|\lambda|^{-\frac{d(d-1)}{2(d+1)}}\|W\|_{d+1,1},
\end{align}
where we also used that $(2l)!\leq (2l)^{2l}$ and $s_1(K)=\|K\|$. Combining \eqref{trick sphere Laplacian}, \eqref{Weyl's asymptotics}, \eqref{structure formula}, \eqref{FLS extension al}, we obtain
\begin{align*}
s_k(W\rho_{\delta,\epsilon}\mathcal{E}(\lambda))\lesssim c_{\delta}^{1/2} (2l)^{2l}\left(C_wC_{N,d}^{-1}\epsilon k^{\frac{1}{d-1}}\right)^{-2l} |\lambda|^{-\frac{d(d-1)}{2(d+1)}}\|W\|_{d+1,1}.
\end{align*}
Since $\inf_{l\in\N}(2l)^{2l}M^{-2l}\lesssim \exp(-cM)$ for any $M>0$ and some $c>0$, the claim follows.
\end{proof}

\begin{proof}[Proof of Proposition \ref{prop. sk(BS) Lp}]
The estimate \eqref{singular values Lp im lm>0} for $\im\lambda\geq 0$ follows from \eqref{Frank--Laptev--Safronov}. 
For $\im\lambda<0$ we use Stone's formula \eqref{BS Stone formula}.
In view of \eqref{singular values Lp im lm>0} for $\im\lambda\geq 0$, \eqref{singular values Lp im lm <0 (used for Schatten membership)} follows from
\begin{align*}
s_k(|V|^{1/2}\rho_{\delta}\mathcal{E}(\lambda)\mathcal{E}(\overline{\lambda})^*\rho_{\delta}|V|^{1/2})\lesssim c_{\delta} k^{-\frac{1}{d+1}} |\lambda|^{-\frac{d(d-1)}{d+1}}\|V\|_{(d+1)/2,1/2},
\end{align*}
which is a consequence of the more general bound in Lemma~\ref{lemma W1W2 bound E(lambda)E(lambdabar)star}.
Similarly, \eqref{singular values Lp exponential} follows from
\begin{align*}
s_k(|V|^{1/2}\rho_{\delta,\epsilon}\mathcal{E}(\lambda)\mathcal{E}(\overline{\lambda})^*\rho_{\delta,\epsilon}|V|^{1/2})\lesssim c_{\delta} k^{-\frac{1}{d+1}} |\lambda|^{-\frac{d(d-1)}{d+1}}\exp\left(-c\epsilon k^{\frac{1}{d-1}}\right)\|V\|_{(d+1)/2,1/2},
\end{align*}
which is a consequence of Lemma \ref{lemma singular values WE exponential decay in sk}.
\end{proof}

\subsection{Compactly supported potentials}

\begin{proposition}\label{prop. sk(BS) comp.}
Suppose $d\geq 3$ is odd. Let $V\in L^{\infty}_{\rm comp}(\R^d)$ be supported in a ball $B_R=B(0,R)$. 
 Then
\begin{align}\label{cpt. singular values Ancona}
s_k(BS(\lambda))
\lesssim  (R|\lambda|+k^{\frac{1}{d}})^{-1}\|V\|_{\infty}R^2,\quad \im\lambda\geq 0,
\end{align}
and for $\lambda$ in the exterior of a fixed cone, 
\begin{align}\label{cpt. singular values exterior of cone}
s_k(BS(\lambda))
\lesssim (k^{\frac{2}{d}}R^{-2}+|\lambda|^2)^{-1}\|V\|_{\infty},\quad \im\lambda\gtrsim |\lambda|.
\end{align}
In particular, for any $\theta\in [0,1]$ and $\im\lambda\gtrsim |\lambda|$,
\begin{align}\label{cpt. singular values exterior of cone theta}
s_k(BS(\lambda))
\lesssim k^{\frac{-2\theta}{d}}R^{2\theta}|\lambda|^{-2(1-\theta)}\|V\|_{\infty}.
\end{align}
If $\nu\geq 0$, $\theta\in [0,1]$, then for $\im\lambda\geq 0$,
\begin{align}\label{cpt. singular values im lm >0}
s_k(BS(\lambda))\lesssim (k^{-\frac{\nu}{d-1}}  \langle \lambda R\rangle^{\nu}\ln \langle \lambda R\rangle R|\lambda|^{-1}+k^{\frac{-2\theta}{d}}R^{2\theta}|\lambda|^{-2(1-\theta)})\|V\|_{\infty},
\end{align}
and for any $\delta\in (0,1]$, $\im\lambda<0$,
\begin{align}\label{cpt. singular values im lm<0}
s_k(BS(\lambda)-BS(-\lambda))\lesssim k^{-\frac{\nu}{d-1}}\delta^{1-d}  \langle \lambda R\rangle^{\nu}\ln \langle \lambda R\rangle R|\lambda|^{-1}\|\rho_{\delta}^{-2}V\|_{\infty}.
\end{align}
Moreover, for any $\delta,\epsilon\in (0,1]$, $\im\lambda<0$, 
\begin{align}\label{cpt. singular values exponential}
s_k(BS(\lambda)-BS(-\lambda))\lesssim \delta^{1-d}R|\lambda|^{-1}\|\rho_{\delta,\epsilon}^{-2}V\|_{\infty}\exp\left(-c\epsilon k^{\frac{1}{d-1}}\right).
\end{align}
All implicit constant depend only on $d,\theta,\nu$.
\end{proposition}

\begin{proof}
1. The proof strategy is similar to that of Proposition \ref{prop. sk(BS) Lp}. We first note that
\begin{align*}
(k^{\frac{2}{d}}R^{-2}+\lambda^2)^{-1}\leq 
k^{\frac{-2\theta}{d}}R^{2\theta}|\lambda|^{-2(1-\theta)}
\end{align*}
for any $\theta\in[0,1]$. Hence \eqref{cpt. singular values exterior of cone theta} follows from \eqref{cpt. singular values exterior of cone}. For the proof of \eqref{cpt. singular values im lm >0} it will be convenient to split $R_0(\lambda)=R_0(\lambda)^{\rm high}+R_0(\lambda)^{\rm low}$, where $R_0(\lambda)^{\rm low}$ has Fourier multiplier $(\xi^2-\lambda^2)^{-1}\chi(\xi/|\lambda|)$ and $\chi$ is a smooth, radial function on $\R^d$ that equals $1$ on $B(0,2)$ and is supported on $B(0,4)$. In view of Stone's formula \eqref{BS Stone formula} and since $|V|\leq \|V\|_{\infty}\mathbf{1}_{B_R}$, it will then suffice to prove 
\begin{align}\label{cpt. singular values R0 lm>0}
s_k(\mathbf{1}_{B_R}R_0(\lambda)\mathbf{1}_{B_R})
\lesssim (R|\lambda|+k^{\frac{1}{d}})^{-1}R^2,\quad \im\lambda\geq 0,
\end{align}
\begin{align}\label{cpt. singular values R0 exterior cone}
s_k(\mathbf{1}_{B_R}R_0(\lambda)\mathbf{1}_{B_R})
\lesssim (k^{\frac{2}{d}}R^{-2}+\lambda^2)^{-1},\quad \im\lambda\gtrsim|\lambda|,
\end{align}
\begin{align}\label{cpt. singular values R0high}
s_k(\mathbf{1}_{B_R}R_0(\lambda)^{\rm high}\mathbf{1}_{B_R})
\lesssim (k^{\frac{2}{d}}R^{-2}+\lambda^2)^{-1},\quad \im\lambda\geq 0,
\end{align}
\begin{align}\label{cpt. singular values Elambda R0low}
s_k(\mathbf{1}_{B_R}R_0(\lambda)^{\rm low}\mathbf{1}_{B_R})
\lesssim k^{-\frac{\nu}{d-1}} R|\lambda|^{-1}\langle \lambda R\rangle^{\nu}\ln \langle \lambda R\rangle,\quad \im\lambda\geq 0,
\end{align}
\begin{align}\label{cpt. singular values Elambda}
s_k(\mathbf{1}_{B_R}\rho_{\delta}\mathcal{E}(\lambda))\lesssim  k^{-\frac{\nu}{2(d-1)}} R^{\frac{1}{2}}|\delta\lambda|^{-\frac{d-1}{2}}\langle \lambda R\rangle^{\frac{\nu}{2}},\quad\lambda\in\C,
\end{align}
\begin{align}\label{cpt. singular values Elambda exponential}
s_k(\mathbf{1}_{B_R}\rho_{\delta,\epsilon}\mathcal{E}(\lambda))\lesssim  
R^{\frac{1}{2}}|\delta\lambda|^{-\frac{d-1}{2}}\exp\left(-c\epsilon k^{\frac{1}{d-1}}\right),\quad\lambda\in\C.
\end{align}

2. We first prove \eqref{cpt. singular values R0 lm>0}
We start with the following resolvent estimate, see e.g. \cite[(3.3)]{MR4025959}:
\begin{align}\label{D'Ancona}
|\lambda|\|R_0(\lambda)f\|_{\dot{Y}}+\|\nabla R_0(\lambda)f\|_{\dot{Y}}\lesssim \|f\|_{\dot{Y}^*},\quad \im\lambda\geq 0,
\end{align} 
where the space $\dot{Y}$ is defined through the norm
\begin{align*}
\|f\|_{\dot{Y}}^2:=\sup_{R>0}\frac{1}{R}\int_{|x|\leq R}|f(x)|^2\rd x,
\end{align*}
and the (pre-) dual $\dot{Y}^*$ can be equipped with the equivalent norm
\begin{align*}
\|f\|_{\dot{Y}^*}\asymp \sum_{j\in\Z} 2^{j/2}\|f\|_{L^2(2^j\leq |x|<2^{j+1})}.
\end{align*}
Note that $\dot{Y}^*$ can be viewed as a homogeneous version of the ``$B$-space" of Agmon and Hörmander \cite{MR0466902}. We have
\begin{align*}
s_k(\mathbf{1}_{B_R}R_0(\lambda)\mathbf{1}_{B_R})\leq s_k((|\lambda|^2-\Delta_{\mathbb{T_R}})^{-\frac{1}{2}})\|(|\lambda|^2-\Delta_{\mathbb{T_R}})^{\frac{1}{2}}\mathbf{1}_{B_R}R_0(\lambda)\mathbf{1}_{B_R}\|,
\end{align*}
where $\mathbb{T}_R:=\R^d/(2R\Z)^d$ is the $d$-dimensional torus of sidelength $2R$. By Weyl's asymptotics,
\begin{align*}
s_k((|\lambda|^2-\Delta_{\mathbb{T_R}})^{-\frac{1}{2}})\lesssim (|\lambda|^2+R^{-2}k^{\frac{2}{d}})^{-\frac{1}{2}}.
\end{align*}
Using \eqref{D'Ancona}, we have
\begin{align*}
&\|(|\lambda|^2-\Delta_{\mathbb{T_R}})^{\frac{1}{2}}\mathbf{1}_{B_{R}}R_0(\lambda)\mathbf{1}_{B_{R}}\|\lesssim |\lambda| \|\mathbf{1}_{B_{R}}R_0(\lambda)\mathbf{1}_{B_{R}}\|+\|\mathbf{1}_{B_{2R}}\nabla R_0(\lambda)\mathbf{1}_{B_{2R}}\|\\
&+R^{-1}\|\mathbf{1}_{B_{2R}}\nabla R_0(\lambda)\mathbf{1}_{B_{2R}}\|\lesssim R.
\end{align*}
In the first inequality we used that $\|(|\lambda|^2-\Delta_{\mathbb{T_R}})^{\frac{1}{2}}\cdot\|$ is equivalent to $|\lambda|\|\cdot\|+\|\nabla\cdot\|$ and inserted a smooth cutoff function $\chi_R=\chi(\cdot/R)$, with $\mathbf{1}_{B_R}\leq \chi_R\leq \mathbf{1}_{B_{2R}}$. Combining the last two displays proves \eqref{cpt. singular values R0 lm>0} and hence \eqref{cpt. singular values Ancona}. 

3. Next, we prove \eqref{cpt. singular values R0 exterior cone}, \eqref{cpt. singular values R0high}, \eqref{cpt. singular values Elambda R0low}.
Since the Fourier multiplier of $R_0(\lambda)^{\rm high}$ is bounded by $(\xi^2+\lambda^2)^{-1}$, Weyl asymptotics for $-\Delta$ on $B_R$ yield \eqref{cpt. singular values R0high}.
The same estimate holds for $R_0(\lambda)$ instead of $R_0(\lambda)^{\rm high}$ if $\im\lambda\gtrsim|\lambda|$, which proves \eqref{cpt. singular values R0 exterior cone}.
We will momentarily show that the estimate \eqref{cpt. singular values Elambda R0low} for $R_0(\lambda)^{\rm low}$ would follow from 
\begin{align}\label{cpt. singular values Elambda 4}
s_k(\mathbf{1}_{B_R}\mathcal{E}(\lambda))\lesssim  k^{-\frac{\nu}{2(d-1)}} R^{\frac{1}{2}}|\lambda|^{-\frac{d-1}{2}}\langle \lambda R\rangle^{\frac{\nu}{2}},
\quad \im\lambda\geq 0.
\end{align}
We defer the proof of \eqref{cpt. singular values Elambda 4} to Lemma \ref{lemma comp. sk E(lambda)} below.
By scaling, we may assume that $\re \lambda=1$. Moreover, since we have already proved \eqref{cpt. singular values R0 exterior cone}, we may assume $0\leq \im\lambda<1$. Using polar coordinates, we have
\begin{align*}
R_0(1\pm\I\, \im\lambda)^{\rm low}f=(2\pi)^{-d}\int_0^{4}\frac{\chi(r)\E(r)\E(r)^*f}{r^2-(1\pm\I\, \im\lambda)^2}\rd r.
\end{align*} 
It would then follow from \eqref{cpt. singular values Elambda 4} that
\begin{align*}
\|\mathbf{1}_{B_R}R_0(1\pm\I\, \im\lambda)^{\rm low}\mathbf{1}_{B_R}\|_{\mathfrak{S}^{\frac{d-1}{\nu},\rm w}}\lesssim R(1+R)^{\nu} \int_0^{4}|r-1|^{-1}\rd r.
\end{align*}
Unfortunately, the integral is divergent. The redeeming feature is that the spatial localization to $B_R$ smooths out the integrand on the $1/R$ scale, i.e.\ $|r-1|$ may be replaced by $|r-1|+1/R$. To make this rigorous, one observes that, by the convolution theorem, 
\begin{align*}
\mathbf{1}_{B_R}m(D)\mathbf{1}_{B_R}=\mathbf{1}_{B_R}m_{R}(D)\mathbf{1}_{B_R},
\end{align*}
whenever $m(D)$ is a Fourier multiplier, $m_R:=\varphi_R\ast m$, $\varphi_R(\xi):=R^d\varphi(R\xi)$ and $\varphi$ is a Schwartz function such that $\widehat{\varphi}=1$ on $B(0,2)$. 
The previous argument thus yields (see \cite[Sect. 6.2]{CM22} for more details)
\begin{align}\label{ln appears}
\|\mathbf{1}_{B_R}R_0(1\pm\I 0)^{\rm low}\mathbf{1}_{B_R}\|_{\mathfrak{S}^{\frac{d-1}{\nu},\rm w}}\lesssim R\langle R\rangle^{\nu} \int_0^4 (|r-1|+1/R)^{-1}\rd r
\lesssim R\langle R\rangle^{\nu}\ln \langle R\rangle,
\end{align}
which completes the proof of \eqref{cpt. singular values Elambda R0low}.

4. We defer the proof of \eqref{cpt. singular values Elambda} to Lemma \ref{lemma comp. sk E(lambda)} below and  continue with the proof of \eqref{cpt. singular values Elambda exponential}. As in the proof of Lemma \ref{lemma singular values WE exponential decay in sk} we estimate
\begin{align*}
s_k(\mathbf{1}_{B_R}\rho_{\delta,\epsilon}\mathcal{E}(\lambda))\leq \|\mathbf{1}_{B_R}\rho_{\delta,\epsilon}\mathcal{E}(\lambda)(I-\epsilon^2\Delta_S)^l\|s_k((I-\epsilon^2\Delta_S)^{-l}).
\end{align*}
The same argument as there then shows that \eqref{cpt. singular values Elambda exponential} is a consequence of the norm bound \eqref{Agmon--Hörmander complex} and Weyl's asymptotics \eqref{Weyl's asymptotics}.
\end{proof}

\begin{lemma}\label{lemma comp. sk E(lambda)}
Suppose $d\geq 2$, $R>0$, $\nu\geq 0$. Then \eqref{cpt. singular values Elambda 4} holds. Moreover, \eqref{cpt. singular values Elambda} holds for any $\delta\in(0,1]$.
\end{lemma}

\begin{proof}
The proof is again similar to that of Lemma \ref{lemma singular values WE exponential decay in sk}. The argument is simpler since $\nu$ is fixed; however, $\nu$ is not necessarily an integer. We can still write
\begin{align}
s_k(\mathbf{1}_{B_R}\mathcal{E}(\lambda))&\leq \|\mathbf{1}_{B_R}\mathcal{E}(\lambda)(I-\Delta_S)^{\frac{\nu}{4}}\|s_k((I-\Delta_S)^{-\frac{\nu}{4}}),\label{nu/4 a}\\
s_k(\mathbf{1}_{B_R}\rho_{\delta}\mathcal{E}(\lambda))&\leq \|\mathbf{1}_{B_R}\rho_{\delta}\mathcal{E}(\lambda)(I-\Delta_S)^{\frac{\nu}{4}}\|s_k((I-\Delta_S)^{-\frac{\nu}{4}}),\label{nu/4 b}
\end{align}
where $(I-\Delta_S)^{\kappa}$ is defined in terms of the eigenfunction expansion of $-\Delta_S$. If $\nu/4$ is an integer, then
integration by parts together with \eqref{AH for lambda>0} yields
\begin{align*}
\|\mathbf{1}_{B_R}\mathcal{E}(\lambda)(I-\Delta_S)^{\frac{\nu}{4}}\|\lesssim 
R^{\frac{1}{2}}|\lambda|^{-\frac{d-1}{2}}\langle \lambda R\rangle^{\frac{\nu}{2}},\quad\lambda>0,
\end{align*}
which in combination with \eqref{nu/4 a} and Weyl's asymptotics \eqref{Weyl's asymptotics} yields \eqref{cpt. singular values Elambda 4} in this case. For non integer $\nu/4$, the result follows by complex interpolation applied to the analytic family of operators $\kappa\mapsto \langle \lambda R\rangle^{-2\kappa}(I-\Delta_S)^{\kappa}$. The proof of \eqref{cpt. singular values Elambda} is analogous, but uses \eqref{Agmon--Hörmander complex} instead of \eqref{AH for lambda>0}.
\end{proof}

\subsection{Pointwise decaying potentials}
We will use the following abbreviations:
\begin{align}
v_{\rho,R}:=\sup_{x\in\R^d}(1+|x|/R)^{\rho}|V(x)|,\quad
v_{\rho,R,\gamma}:=\sup_{x\in\R^d}\e^{2\gamma|x|}(1+|x|/R)^{\rho}|V(x)|.
\end{align}

\begin{proposition}\label{prop. sk(BS) ptw.}
Suppose $d\geq 3$ is odd, $\theta\in [0,1]$, $\rho,R,\gamma>0$.
If $\rho>2\theta$, then for $\lambda$ in the exterior of a fixed cone $\im\lambda\gtrsim |\lambda|$,
\begin{align}\label{ptw. singular values exterior of cone theta}
s_k(BS(\lambda))
\lesssim  k^{-\frac{2\theta}{d}}R^{2\theta}|\lambda|^{-2(1-\theta)}v_{\rho,R}.
\end{align}
If $\nu\geq 0$, $\kappa>0$ and $\rho>\max(1+\nu+\kappa,2\theta)$, then for $\im\lambda\geq 0$,
\begin{align}\label{ptw. singular values im lm >0}
s_k(BS(\lambda))\lesssim (k^{-\frac{\nu}{d-1}}  \langle \lambda R\rangle^{\nu+\kappa}R|\lambda|^{-1}+k^{\frac{-2\theta}{d}}R^{2\theta}|\lambda|^{-2(1-\theta)})v_{\rho,R}.
\end{align}
Moreover, for $\nu,\kappa,\rho$ as above, $\delta\in (0,1]$, $\gamma\geq \sqrt{1+\delta}|\im\lambda|$, $\im\lambda<0$, we have
\begin{align}\label{ptw. singular values im lm <0 (needed for Schatten membership)}
s_k(BS(\lambda))
\lesssim \delta^{1-d}k^{-\frac{\nu}{d-1}}  \langle \lambda R\rangle^{\nu+\kappa}R|\lambda|^{-1}v_{\rho,R,\gamma}+k^{\frac{-2\theta}{d}}R^{2\theta}|\lambda|^{-2(1-\theta)}v_{\rho,R},
\end{align}
and for $\delta,\epsilon\in (0,1]$, $\gamma\geq \sqrt{1+\delta}|\im\lambda|+\epsilon|\lambda|$, $\im\lambda<0$, we have
\begin{align}\label{ptw. singular values exponential}
s_k(BS(\lambda)-BS(-\lambda))\lesssim \delta^{1-d}R|\lambda|^{-1}v_{\rho,R,\gamma}\exp\left(-c\epsilon k^{\frac{1}{d-1}}\right).
\end{align}
All implicit constant depend only on $d,\theta,\rho,\nu,\kappa$.
\end{proposition}

\begin{proof}
The results follow from Proposition \ref{prop. sk(BS) comp.} by dyadic summation. More precisely, we have $V=\sum_j V_j$ where 
$V_j=V\mathbf{1}_{A_j}$ is supported in the dyadic shell $A_j=\{2^{j}R\leq |x|\leq  2^{j+1}R\}$, $j\in\N_0$.
Then \eqref{cpt. singular values exterior of cone theta} implies
\begin{align*}
    \|BS(\lambda)\|_{\mathfrak{S}^{\frac{d}{2\theta},\rm w}}&=\|R_0(\lambda)^{\frac{1}{2}}V|R_0(\lambda)|^{\frac{1}{2}}|\|_{\mathfrak{S}^{\frac{d}{2\theta},\rm w}}\leq \sum_j \|R_0(\lambda)^{\frac{1}{2}}V_j|R_0(\lambda)|^{\frac{1}{2}}|\|_{\mathfrak{S}^{\frac{d}{2\theta},\rm w}}\\
&=\sum_j \||V_j|^{\frac{1}{2}}|R_0(\lambda)V_j^{\frac{1}{2}}\|_{\mathfrak{S}^{\frac{d}{2\theta},\rm w}}
    \lesssim |\lambda|^{2(1-\theta)}R^{2\theta}v_{\rho,R} \sum_j 2^{(2\theta-\rho) j}.
\end{align*}
Since $\rho>2\theta$, this yields \eqref{ptw. singular values exterior of cone theta}. Similarly, dyadic summation of \eqref{cpt. singular values im lm >0}, \eqref{cpt. singular values im lm<0}, using $\ln\langle\lambda R\rangle\lesssim_{\kappa} \langle \lambda R\rangle^{\kappa}$, yields \eqref{ptw. singular values im lm >0}, \eqref{ptw. singular values im lm <0 (needed for Schatten membership)}. To prove \eqref{cpt. singular values exponential}, we observe that
\begin{align*}
\|\e^{2(\sqrt{1+\delta}|\im\lambda|+\epsilon|\lambda|)|\cdot|}V\|_{L^{\infty}(2^{j-1}R\leq |x|\leq  2^jR)}\leq v_{\rho,R,\gamma}2^{-\rho j}.
\end{align*}
Thus, dyadic summation of \eqref{cpt. singular values exponential} yields \eqref{ptw. singular values exponential}.
\end{proof}

\begin{corollary}\label{cor. BS in weak Schatten ptw.}
Assume that $\theta\in[1/2,1]$, $\rho>2\theta$ and $v_{\rho,R,\gamma}<\infty$. Then we have $BS(\lambda)\in\mathfrak{S}^{\alpha}$ for $\im\lambda>-\gamma$ and $\alpha>\max(\frac{d-1}{\rho-1},\frac{d}{2\theta})$. In particular, $BS(\lambda)$ is compact.
\end{corollary}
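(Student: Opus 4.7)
The plan is to read off Schatten membership directly from the singular value bounds of Proposition \ref{prop. sk(BS) ptw.}, by choosing the free parameters $\nu$ and $\kappa$ in terms of $\rho$ and $\alpha$ so that both power terms appearing in the estimates are summable to the $\alpha$-th power.

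First, suppose $\im\lambda\geq 0$. By \eqref{ptw. singular values im lm >0}, for any admissible $\nu\geq 0$, $\kappa>0$ with $\rho>\max(1+\nu+\kappa,2\theta)$,
\begin{align*}
s_k(BS(\lambda))^{\alpha}\lesssim k^{-\frac{\alpha\nu}{d-1}}\bigl(\langle \lambda R\rangle^{\nu+\kappa}R|\lambda|^{-1}v_{\rho,R}\bigr)^{\alpha}+k^{-\frac{2\alpha\theta}{d}}\bigl(R^{2\theta}|\lambda|^{-2(1-\theta)}v_{\rho,R}\bigr)^{\alpha}.
\end{align*}
Since $\alpha>d/(2\theta)$ we have $2\alpha\theta/d>1$, so the second term is summable in $k$. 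For the first term we need $\alpha\nu/(d-1)>1$, i.e.\ $\nu>(d-1)/\alpha$. Because $\alpha>(d-1)/(\rho-1)$ we have $(d-1)/\alpha<\rho-1$, so we may choose $\nu\in((d-1)/\alpha,\rho-1)$ and then $\kappa>0$ small enough that $1+\nu+\kappa<\rho$. With these choices both series converge and $BS(\lambda)\in\mathfrak{S}^{\alpha}$.

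Next suppose $-\gamma<\im\lambda<0$. Then there is $\delta\in(0,1]$ with $\sqrt{1+\delta}|\im\lambda|\leq\gamma$, and we may apply \eqref{ptw. singular values im lm <0 (needed for Schatten membership)} with the very same $\nu,\kappa$ as above. The difference from the previous case is only that the constant $v_{\rho,R}$ in the first term is replaced by $\delta^{1-d}v_{\rho,R,\gamma}$, which is finite by hypothesis and independent of $k$. Summability of $k^{-\alpha\nu/(d-1)}$ and $k^{-2\alpha\theta/d}$ is unaffected, so again $BS(\lambda)\in\mathfrak{S}^{\alpha}$.

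There is no genuine obstacle here: the only thing to watch is that the admissibility conditions on $\nu,\kappa$ in Proposition \ref{prop. sk(BS) ptw.} can be met simultaneously with the summability conditions $\alpha\nu>d-1$ and $2\alpha\theta>d$, and this is exactly guaranteed by the hypotheses $\theta\in[1/2,1]$, $\rho>2\theta$, and $\alpha>\max(\tfrac{d-1}{\rho-1},\tfrac{d}{2\theta})$.
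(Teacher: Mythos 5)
Your proposal is correct and follows essentially the same route as the paper: in both cases one chooses $\nu$ and $\kappa$ compatible with the admissibility constraint $\rho>1+\nu+\kappa$ so that $\alpha\nu>d-1$ and $2\alpha\theta>d$, and then reads off $\mathfrak{S}^{\alpha}$ membership from \eqref{ptw. singular values im lm >0} for $\im\lambda\geq 0$ and from \eqref{ptw. singular values im lm <0 (needed for Schatten membership)} for $-\gamma<\im\lambda<0$. The paper's specific parametrization $\nu:=\rho-1-2\kappa$ is just a concrete instance of your choice $\nu\in((d-1)/\alpha,\rho-1)$ with $\kappa$ small.
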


\begin{proof}
Assume $\im\lambda\geq 0$ first. Let $\kappa>0$ be such that $\nu:=\rho-1-2\kappa>0$ and $\alpha>\max(\frac{d-1}{\nu},\frac{d}{2\theta})$. Since $\rho>\max(1+\nu+\kappa,2\theta)$, \eqref{ptw. singular values im lm >0} implies that $BS(\lambda)\in\mathfrak{S}^{\alpha}$. If $0>\im\lambda>-\gamma$, then there exists $\delta>0$ such that $\gamma\geq\sqrt{1+\delta}|\im\lambda|$, and the claim follows from \eqref{ptw. singular values im lm <0 (needed for Schatten membership)} in the same way.
\end{proof}

The proof of Proposition \ref{prop. meromorphic ptw.} then follows from the same routine argument as that of Proposition \ref{prop. meromorphic Lp}.

\section{Fredholm determinant}\label{sect. Fredholm det.}
We place $D(0,r)$ into a larger disk $D(\lambda_0,|\lambda_0|+r)$, where $\lambda_0\in\I\R_+$ will be suitably chosen, and consider the function $F_{\alpha}$ defined in \eqref{def. F_alpha}. We recall the definitions of $H_{\alpha}(\lambda)$ and $F_{\alpha}(\lambda)$:
\begin{align*}
H_{\alpha}(\lambda):=\det(I-(-BS(\lambda))^{\alpha}),\quad F_{\alpha}(k):=\frac{H_{\alpha}(\lambda_0+k)}{H_{\alpha}(\lambda_0)},\quad \lambda,k\in\C.
\end{align*}
We also recall the definition of the `averaged counting functions'
\begin{align*}
N_V(r):=\int_0^r\frac{n_V(t)}{t}\rd t,\quad N_{H_{\alpha}}(r):=\int_0^r\frac{n_{H_{\alpha}}(t)}{t}\rd t,\quad \quad N_{F_{\alpha}}(r):=\int_0^r\frac{n_{F_{\alpha}}(t)}{t}\rd t,
\end{align*}
where $n_V(t)$ denotes the number of resonances of $-\Delta+V$ and $n_{H_{\alpha}}$, $n_{F_{\alpha}}$ denote the number of zeros (counting multiplicity) of the analytic functions $H_{\alpha}$, $F_{\alpha}$, respectively, in the disk $D(0,r)$. Finally, we recall the estimates $N_V(r)\leq N_{H_{\alpha}}(r)$
and $n_V(r)\leq (\ln s)^{-1}N_V(sr)$, for any fixed $s>1$.
\begin{figure}[b]
\begin{center}
\begin{tikzpicture}
\draw[gray, thick] (-3,0) -- (3,0);
\draw[gray, thick] (0,-1) -- (0,3);
\draw[black, very thick](0,0) circle (0.5);
\draw[black, very thick](0,1) circle (1.5);
\filldraw[black] (0,1) circle (2pt) node[anchor=west]{$|\lambda_0|$};
\filldraw[black] (1.118,0) circle (2pt)node[anchor=west]{$P$};
\draw[black, thick, dotted] (0,1) -- (1.118,0);
\draw[black, thick, dotted] (-3,-0.5) -- (3,-0.5)node[anchor=west]{$\im\lambda=-r$};
\end{tikzpicture}
\end{center}
\caption{The disk $D(0,r)$ inside the larger disk $D(\lambda_0,|\lambda_0|+r)$.}
\end{figure}
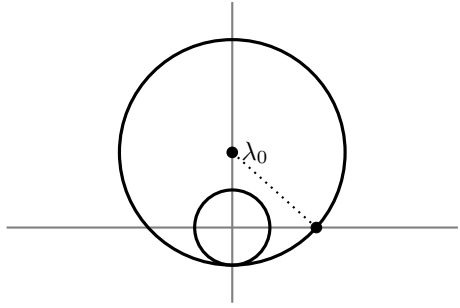
Hence, we have 
\begin{align}
    n_V(r)\ln s\leq N_V(sr)\leq N_{H_{\alpha}}(sr)\leq N_{F_{\alpha}}(|\lambda_0|+sr),
 \end{align}   
    where the last inequality follows from the triangle inequality. Indeed, if $\lambda\in D(0,sr)$ is a zero of $H_{\alpha}$, then $k=\lambda-\lambda_0$ is a zero of $F_{\alpha}$ and $|k|< |\lambda_0|+sr$.
By Jensen's formula, since $F_{\alpha}(0)=1$, we have
\begin{align*}
N_{F_{\alpha}}(|\lambda_0|+sr)&=\frac{1}{2\pi}\int_0^{2\pi}\ln|F_{\alpha}((|\lambda_0|+sr)\e^{\I\theta})|\rd \theta
\leq \max_{|k|=|\lambda_0|+sr}\ln|F_{\alpha}(k)|.
\end{align*}
By definition of $F_{\alpha}$,
\begin{align*}
    \max_{|k|=|\lambda_0|+sr}\ln|F_{\alpha}(k)|&=\max_{|k|=|\lambda_0|+sr}\ln|H_{\alpha}(\lambda_0+k)|-\ln|H_{\alpha}(\lambda_0)|\\
 &=\max_{\lambda\in \partial D(\lambda_0,|\lambda_0|+sr)}\ln|H_{\alpha}(\lambda)|-\ln|H_{\alpha}(\lambda_0)|
\end{align*}
To estimate this we set
\begin{align}
    S_+&:=S_+(sr,\lambda_0):=\{\lambda\in\C:\im\lambda\geq 0,\, |\lambda|\geq P\},\\
    S_-&:=S_-(sr,\lambda_0):=\{\lambda\in\C:-sr\leq \im\lambda< 0,\, sr\leq |\lambda|\leq P\},
\end{align}
where $P:=P(sr,\lambda_0):=\sqrt{2|\lambda_0|sr+(sr)}^2$.
Elementary geometry (see Figure 1) reveals that 
\begin{align}
    \partial D(\lambda_0,|\lambda_0|+sr)\subset S_+(sr,\lambda_0)\cup S_-(sr,\lambda_0).
\end{align}
In summary, we have 
\begin{align}\label{in summary N_V leq...}
    N_V(sr)\leq \max_{\lambda\in S_+\cup S_-}\ln|H_{\alpha}(\lambda)|-\ln|H_{\alpha}(\lambda_0)|,
\end{align}
where we recall that $S_{\pm}$ depend on $sr$ and $\lambda_0$, and $\lambda_0\in\I \R_+$ is to be determined.
By the Weyl inequality \cite[B.5.8]{MR3969938}, for any trace class operator $A$,
\begin{align}\label{Weyl inequality}
\ln|\det(I-A)|\leq \sum_{k\in\N}\ln(1+s_k(A))\leq \sum_{k\in\N} s_k(A),
\end{align}
where we used $\ln(1+x)\leq x$ for $x\geq 0$ in the second inequality. We use \eqref{Weyl inequality} with $A=(-BS(\lambda))^{\alpha}$.
By sparsifying the sum
\begin{align}\label{sparsifying}
    \sum_{k\in\N}=\sum_{k\in\alpha\N}+\sum_{k\in\alpha\N-1}+\ldots +\sum_{k\in\alpha(\N-1)+1}
\end{align}
and using the monotonicity of the singular values, it is sufficient to estimate the last sum in the above display, which can be written as
\begin{align}\label{last term in sparsifying}
    \sum_{j=0}^{\infty}\ln(1+s_{\alpha j+1}((-BS(\lambda))^{\alpha})).
\end{align}
Repeated application of \eqref{s_j(AB)} shows that 
\begin{align}\label{s_alpha j}
    s_{\alpha j+1}((-BS(\lambda))^{\alpha})\leq [s_{j+1}(BS(\lambda))]^{\alpha}.
\end{align}
We thus obtain the following estimate on the first term of \eqref{in summary N_V leq...}.
\begin{lemma}\label{lemma Weyl type first term}
Assume that $\alpha>d/2$ is an integer. Then for $\lambda\in S_+\cup S_-$ we have
\begin{align}\label{Weyl type first term}
\ln|H_{\alpha}(\lambda)|\leq \alpha \sum_{k\in\N}\ln(1+[s_k(BS(\lambda))]^{\alpha})\leq \alpha \|BS(\lambda)\|_{\mathfrak{S}^{\alpha}}^{\alpha}.
\end{align}
\end{lemma}
We also need an estimate for the second term in \eqref{in summary N_V leq...}. To this end we use \eqref{Weyl inequality} with $A=(I-K)^{-1}K$ and $K=(-BS(\lambda_0))^{\alpha}$. In view of the identity $(I-K)^{-1}=I+A$, this gives
\begin{align*}
|\det((I-K)^{-1})|\leq \prod_{k\in\N} (1+\|(I-K)^{-1}\|s_k(K)).
\end{align*}
Assuming that $\|K\|\leq 1/2$, we could estimate $\|(I-K)^{-1}\|\leq 2$. Then, using $\det((I-K)^{-1})=(\det(I-K))^{-1}$, we would have
\begin{align*}
-\ln|H_{\alpha}(\lambda_0)|\leq \sum_{k\in\N} \ln(1+2s_k(K)).
\end{align*}
The argument leading to \eqref{Weyl type first term} 
then yields the following estimate.
\begin{lemma}\label{lemma Weyl type second term}
Assume that $\alpha>d/2$ is an integer and that $\|BS(\lambda_0)^{\alpha}\|\leq 1/2$. Then
\begin{align}\label{Weyl type second term}
-\ln|H_{\alpha}(\lambda_0)|\leq \alpha\sum_{k\in\N}\ln(1+2[s_k(BS(\lambda_0))]^{\alpha})\leq 2\alpha \|BS(\lambda_0)\|_{\mathfrak{S}^{\alpha}}^{\alpha}.
\end{align} 
\end{lemma}
Since $\alpha>1$, the assumption of Lemma \ref{lemma Weyl type second term} would be satisfied if $\|BS(\lambda_0)\|\leq 1/2$. In view of the estimates \eqref{Frank--Laptev--Safronov}, \eqref{cpt. singular values exterior of cone} and \eqref{ptw. singular values exterior of cone theta} (with $k=1$ and $\theta=0$) this will be achieved if we set
\begin{align}\label{def lambda0}
2\lambda_0:=\begin{cases}
\I A\|V\|_{(d+1)/2}^{(d+1)/2} \quad &\mbox{(Lorentz space)},\\
\I A\|V\|_{\infty}^{\frac{1}{2}} \quad &\mbox{(compactly supported)},\\
\I A v_{\rho,R}^{\frac{1}{2}}  \quad &\mbox{(pointwise decaying)}.
\end{cases}
\end{align}
where $A$ is a sufficiently large dimensionless constant.
Here $\rho>1$ and $R>0$ are fixed. 
In conclusion, we state the bounds that will be needed in the proof of Theorem 7.1. By \eqref{in summary N_V leq...}, \eqref{Weyl type first term}, \eqref{Weyl type second term} we have 
\begin{align}\label{conclusion sect. 6-1}
    n_V(r)\ln s\leq N_V(sr)\lesssim \alpha(\tilde{n}_+(sr)+\tilde{n}_-(sr)+\tilde{n}_0)
\end{align}
where (fixing an integer $\alpha>d/2$ and $\lambda_0$ as in \eqref{def lambda0})
\begin{align}\label{conclusion sect. 6-2}
        \tilde{n}_{\pm}(r):=\max_{\lambda\in S_{\pm}(r,\lambda_0)} \sum_{k\in\N}\ln(1+[s_k(BS(\lambda))]^{\alpha}),\quad
         \tilde{n}_0:=\sum_{k\in\N}\ln(1+[s_k(BS(\lambda_0))]^{\alpha}).
\end{align}

\section{Proof of Theorem \ref{thm. main intro}}\label{sect. proof main thm. intro}
We will prove the following stronger versions of Theorem \ref{thm. main intro}.
\begin{theorem}\label{thm. main detailed}
    Suppose $d\geq 3$ is odd, $r,\gamma>0$ and
$\delta,\epsilon\in (0,1]$. In the compactly supported case, assume that $\supp V\subset B(0,R)$. In the pointwise decaying and compactly supported case, we also fix $\theta\in [1/2,1]$, $\nu\geq 0$, $\kappa>0$, and $\rho>\max(1+\nu+\kappa,2\theta)$ (the latter is only needed in the pointwise decaying case). 
\begin{enumerate}
    \item[(a)] Then every resonance $\lambda\in \C$ satisfies
\begin{align}
    |\lambda|\lesssim_{\kappa,\rho}
    \begin{cases}
    \delta^{-\frac{(d-1)^2}{2}}\|\e^{2\sqrt{1+\delta}(\im\lambda)_-|\cdot|}V\|_{(d+1)/2,1/2}^{(d+1)/2}\quad &\mbox{(Lorentz space)},\\
\delta^{-(d-1)}R \langle \lambda R\rangle^{\kappa}\e^{2\sqrt{1+\delta}(\im\lambda)_-R}\|V\|_{\infty}\quad &\mbox{(compactly supported)},\\
\delta^{-(d-1)}R\langle \lambda R\rangle^{\kappa} v_{\rho,R,\sqrt{1+\delta}(\im \lambda)_-}\quad &\mbox{(pointwise decaying)}.
    \end{cases}
\end{align}
\item [(b)]Moreover, there exists $A_0>0$, depending only on $d$, such that if $A\geq A_0$ and $s>1$,
then $n_V(r)\lesssim_{\nu,\kappa,\rho}(\ln s)^{-1} (n_+(sr)+n_-(sr)+n_0$ (the same upper bound holds for the number of resonances in the larger disk $D(\lambda_0,|\lambda_0|+r)$) with
\begin{align*}
 n_+(r):=&
 \begin{cases}
 r^{-2}\|V\|_{(d+1)/2}^{d+1},\\
 (\langle r R+AR\|V\|_{\infty}^{\frac{1}{2}}\rangle^{\nu+\kappa} Rr^{-1}\|V\|_{\infty})^{\frac{d-1}{\nu}}+(Rr)^d(r^{-1}\|V\|_{\infty}^{\frac{1}{2}})^\frac{d}{\theta},\\
(\langle r R+ARv_{\rho,R}^{\frac{1}{2}}\rangle^{\nu+\kappa} Rr^{-1}v_{\rho,R})^{\frac{d-1}{\nu}}+(Rr)^d(r^{-1}v_{\rho,R}^{\frac{1}{2}})^\frac{d}{\theta},
\end{cases}\\
 n_-(r):=&
 \begin{cases}
\epsilon^{1-d}[\ln(C\delta^{-\frac{(d-1)^2}{d+1}} r^{-\frac{2}{d+1}}\|\e^{2\gamma(sr)|\cdot|}V\|_{(d+1)/2,1/2})]^d,\\
\epsilon^{1-d}[\ln(C\delta^{1-d}Rr^{-1}\e^{2\gamma(sr) R}\|V\|_{\infty})]^d,\\
\epsilon^{1-d}[\ln (C\delta^{1-d}Rr^{-1}v_{\rho,R,\gamma(sr)})]^d,
\end{cases}\\
 n_0:=&
 \begin{cases}
 A^{-2},\\
 R^dA^{-\frac{d(1-\theta)}{\theta}}\|V\|_{\infty}^{\frac{d}{2}},\\
 R^dA^{-\frac{d(1-\theta)}{\theta}}v_{\rho,R}^{\frac{d}{2}},
\end{cases}\\
\gamma(r):=&
\begin{cases}
(1+\delta)^{\frac{1}{2}}r+\epsilon(A\|V\|_{(d+1)/2}^{(d+1)/2}r+r^2)^{\frac{1}{2}},\\
(1+\delta)^{\frac{1}{2}}r+\epsilon (A\|V\|_{\infty}^{\frac{1}{2}}r+r^2)^{\frac{1}{2}},\\
(1+\delta)^{\frac{1}{2}}r+\epsilon (Av_{\rho,R}^{\frac{1}{2}}r+r^2)^{\frac{1}{2}},
\end{cases}
\end{align*}
\end{enumerate}
\end{theorem}

\begin{remark}
        For $\im\lambda\geq 0$, the exponential weight and the negative power of $\delta$ in (a) can be omitted. Moreover, the (second) Lorentz exponent can be omitted. More precisely, we have (see Appendix \ref{appendix Details of the proof of Theorem 7.1} for the proof)
   \begin{align}\label{eigenvalue bounds, i.e. Im lambda>0}
    |\lambda|\lesssim_{\kappa,\rho}
    \begin{cases}
    \|V\|_{(d+1)/2}^{(d+1)/2}\quad &\mbox{(Lorentz space)},\\
R\langle \lambda R\rangle^{\kappa} \|V\|_{\infty}\quad &\mbox{(compactly supported)},\\
R\langle \lambda R\rangle^{\kappa} v_{\rho,R}\quad &\mbox{(pointwise decaying)},.
    \end{cases}
    \quad \im\lambda\geq 0.
\end{align}
\end{remark}

\begin{remark}\label{remark nu+kappa<1}
    For the compactly supported and pointwise decaying case, the term $\langle r R+AR\|V\|_{\infty}^{\frac{1}{2}}\rangle^{\nu+\kappa}$ in $n_+(r)$ can be replaced by $\langle r R\rangle^{\nu+\kappa}$ if $\nu+\kappa\leq 1$, see Appendix B.2.2 for details.
\end{remark}

\begin{proof}[Proof of Theorem \ref{thm. main intro} assuming Theorem \ref{thm. main detailed}]
    We only give details for the number of resonances; the bounds \eqref{main thm resonance free region Lp}, \eqref{main thm resonance free region cpt}, \eqref{main thm resonance free region ptw} are obvious. To avoid confusion, we rename $\epsilon$ in Theorem \ref{thm. main intro} to $\epsilon_1$. 
    
    Consider first the Lorentz case. Let $A=A_0$ in Theorem \ref{thm. main detailed}. The lower bound on $r$ in \eqref{main thm n_V Lp} then implies that 
    \begin{align}
        \gamma(sr)\leq(1+\delta)^{\frac{1}{2}}sr+\epsilon r(s^2+sA_0C_2^{-1})^{\frac{1}{2}}.
    \end{align}
    Hence, there exist $\delta=\delta(\epsilon_1)>0$, $\epsilon=\epsilon(\epsilon_1)>0$ and $s=s(\epsilon_1)>1$such that $\gamma(sr)\leq (1+\epsilon_1)r$. It only remains to observe that $n_+(r),n_0\lesssim n_-(r)$. This is clear since $n_+(r),n_0\lesssim 1$ (for $n_+(r)$ this again follows from the lower bound on $r$ in \eqref{main thm n_V Lp}). 
    
    In the compactly supported case we take $A=Cr\|V\|_{\infty}^{-1/2}$ with some constant $C\geq A_0/C_2$. Then again there exist $\delta>0,\epsilon>0,s>1$ such that $\gamma(sr)\leq (1+\epsilon_1)r$. Moreover, since $A\gtrsim 1$, we have $n_0(r)\lesssim n_+(r)$. In the following, we fix $\theta=1/2$ for definiteness, but any $\theta\in [1/2,1]$ would work. The lower bound on $r$ in \eqref{main thm n_V cpt} implies that $\|V\|_{\infty}\lesssim r^2$, so that
    \begin{align*}
        n_+(r)\lesssim_{\nu,\kappa} (\langle rR\rangle^{\nu+\kappa} Rr)^{\frac{d-1}{\nu}}+(Rr)^d\lesssim \langle Rr\rangle^d,
    \end{align*}
    where we fixed $\nu=d$ in the last inequality (any $\nu>d-1$ would work) and chose $\kappa$ small enough. If $Rr<1$, then clearly $n_+(r)\lesssim 1\lesssim n_-(r)$, where the second inequality holds in view of the final comment in Remark \ref{remark literature results}. If $Rr\geq 1$, we first observe that \eqref{main thm resonance free region cpt}, with $\epsilon_1/2$ in place of $\epsilon$, yields
    \begin{align*}
        1\leq C_1(\epsilon_1/2)\langle rR\rangle^{\epsilon} r^{-1}R\e^{2(1+\frac{\epsilon_1}{2})rR}\|V\|_{\infty}.
    \end{align*}
    Together with $\langle rR\rangle^{\epsilon}\lesssim_{\epsilon,\epsilon_1}\e^{\frac{\epsilon_1}{2}rR}$, this implies $n_-(r)\gtrsim_{\epsilon_1} (Rr)^d\gtrsim n_+(r)$.

 The proof in the pointwise decaying case is similar; here we select $\kappa>0$ and $\nu>d-1$ such that $1+\nu+\kappa\leq d+\epsilon_1$.
\end{proof}

Before giving the proof of Theorem \ref{thm. main detailed} it is useful to record the following elementary estimates.

\begin{lemma}\label{lemma sum ln with M}
i) For any $M>0$, $\alpha>\beta>0$, we have
\begin{align}\label{sum ln with M i)}
\sum_{k\in\N}\ln(1+(Mk^{-1/\beta})^{\alpha})\leq C_{\alpha,\beta} M^{\beta}.
\end{align}
ii) For any $M>2$, $\alpha,\beta,\epsilon>0$, we have
\begin{align}\label{sum ln with M ii)}
\sum_{k\in\N}\ln(1+(M \exp(-\epsilon k^{1/\beta}))^{\alpha})\leq C_{\alpha,\beta} \epsilon^{-\beta}[\ln M]^{\beta+1}.
\end{align}
\end{lemma}

\begin{proof}
Without loss of generality we may assume that $\alpha=1$, otherwise we replace $M,\beta$, by $M^{\alpha},\beta/\alpha$ in i) and $M,\epsilon$ by $M^{\alpha},\alpha\epsilon$ in ii). 

i) Set $\mu_k:=Mk^{-1/\beta}$.
The estimate \eqref{sum ln with M i)} is obvious if $M\leq 1$ since then
\begin{align*}
\sum_k\ln(1+\mu_k)
\leq M\sum_k k^{-1/\beta}
\lesssim_{\beta} M\leq M^{\beta},
\end{align*}
where we used that $\beta<1$.
If $M>1$, then we have
\begin{align*}
&\sum_{k>M^{\beta}}\ln(1+\mu_k)\leq \sum_{k>M^{\beta}}Mk^{-1/\beta}\lesssim M(M^{\beta})^{1-1/\beta}= M^{\beta},\\
&\sum_{k\leq M^{\beta}}\ln(1+\mu_k)\lesssim \ln(1+M)+\int_1^{M^{\beta}} \ln(1+Mx^{-1/\beta})\rd x\\
&\lesssim \ln(1+M)+M^{\beta}\int_1^{M} \ln(1+y)y^{-1-\beta}\rd y\lesssim_{\beta} M^{\beta}.
\end{align*}

ii) Set $\mu_k:=M\exp(-\epsilon k^{1/\beta})$. For any $K\in\N$ we have
\begin{align}\label{sum <K and >K}
\sum_k\ln(1+\mu_k)\leq \sum_{k\leq K}\ln(1+\mu_k)+\sum_{k>K}\mu_k,
\end{align}
where we again used $\ln(1+\mu_k)\leq \mu_k$ in the second sum. Estimating the latter by an integral, we have
\begin{align}\label{sum > K}
\sum_{k>K}\mu_k\leq M\int_K^{\infty}\exp(-\epsilon y^{\frac{1}{\beta}})\rd y\lesssim  M\epsilon^{-\beta}\exp\left(-\frac{1}{2}\epsilon K^{\frac{1}{\beta}}\right)
\end{align}
On the other hand, since $\mu_k$ is decreasing, 
\begin{align}\label{sum < K}
\sum_{k\leq K}\ln(1+\mu_k)\leq K\ln M.
\end{align}
Combining \eqref{sum <K and >K}, \eqref{sum > K}, \eqref{sum < K}, we have
\begin{align*}
    \sum_k \ln(1+\mu_k)\lesssim K\ln M+M\epsilon^{-\beta}\exp\left(-\frac{1}{2}\epsilon K^{\frac{1}{\beta}}\right)
\end{align*}
for any $K\in \N$. We choose $K^{\frac{1}{\beta}}=(2/\epsilon)\ln(CM)$, where $C>1$ is such that $K\in\N$. This yields
\begin{align}\label{sum final}
\sum_k \ln(1+\mu_k)\lesssim \epsilon^{-\beta}[\ln(CM)]^{\beta}\ln M+\epsilon^{-\beta}\frac{M}{1+CM}
\lesssim \epsilon^{-\beta}[\ln M]^{\beta+1}.
\end{align}
\end{proof}

\begin{proof}[Proof of Theorem \ref{thm. main detailed}]
We only provide the main arguments here, details are postponed to Appendix \ref{appendix Details of the proof of Theorem 7.1}.

(a) This a consequence of the Birman--Schwinger principle: If $\lambda$ is a resonance, then $\|BS(\lambda)\|\geq 1$. Using the bounds \eqref{singular values Lp im lm>0}, \eqref{singular values Lp im lm <0 (used for Schatten membership)} with $k=1$ yields an upper bound on $s_1(BS(\lambda))=\|BS(\lambda)\|$ in the Lorentz case. In the compactly supported case we use \eqref{cpt. singular values im lm >0}, \eqref{cpt. singular values im lm<0} (with $\theta=1/2$, $\nu=0$). In the pointwise decaying case we use \eqref{ptw. singular values im lm >0}, \eqref{ptw. singular values im lm <0 (needed for Schatten membership)} (with the same choice of $\theta,\nu$). 

(b) The starting point is \eqref{conclusion sect. 6-1}, \eqref{conclusion sect. 6-2}, i.e. the estimate on $n_V(r)$ in terms of the singular values of $BS(\lambda)$. We point out that the constant $A$ in the theorem is the same constant that appear in \eqref{def lambda0}.
In the Lorentz space case we take $\alpha=d+1$, in the other two cases we take $\alpha>\max(\frac{d-1}{\nu},\frac{d}{2\theta})$. The exact choice of $\alpha$ is not important, but the constants will depend on $\alpha$. Our choice guarantees that the sums in the last display converge and are bounded by the expressions in the theorem. This is a consequence of 
the singular value estimates in Section \ref{sect. singular value estimates} and Lemma \ref{lemma sum ln with M}. 
Indeed, the bounds for $n_+(r)$ follow immediately from \eqref{singular values Lp im lm>0}, \eqref{cpt. singular values im lm >0}, \eqref{ptw. singular values im lm >0} and \eqref{sum ln with M i)}, recalling that $|\lambda|\geq r$ for $\lambda\in S_+$ (in the compactly supported and pointwise decaying case, an additional argument is needed for $\nu+\kappa>1$, see Appendix \ref{appendix Details of the proof of Theorem 7.1}). Those for $n_0(r)$ follow from \eqref{singular values Lp im lm>0}, \eqref{cpt. singular values exterior of cone theta}, \eqref{ptw. singular values exterior of cone theta} and \eqref{sum ln with M i)} and the definition of $\lambda_0$, see \eqref{def lambda0}. Those for $n_-(r)$ follow from \eqref{singular values Lp exponential}, \eqref{cpt. singular values exponential}, \eqref{ptw. singular values exponential} and \eqref{sum ln with M ii)}, with
\begin{align*}M=C
    \begin{cases}
        |\lambda|^{-\frac{2}{d+1}}\delta^{-\frac{(d-1)^2}{d+1}} \|\e^{2\gamma(sr)|\cdot|}V\|_{(d+1)/2,1/2},\\
        \delta^{1-d}R|\lambda|^{-1}\|\rho_{\delta,\epsilon}^{-2}V\|_{\infty},\\
        \delta^{1-d}R|\lambda|^{-1}v_{\rho,R,\gamma(sr)},
    \end{cases}
\end{align*}
and $\beta=d-1$. From the upper bound on $|\lambda|$ in part (a) it follows that there is a $C>0$ such that $M>2$; we choose $C$ in such a way. We also use the fact that $\im\lambda\geq -sr$, $|\lambda|\leq \sqrt{2|\lambda_0|sr+(sr)^2}$ for $\lambda\in S_-(sr,\lambda_0)$, which implies that $\|\rho_{\delta,\epsilon}^{-2}V\|_{\infty}\leq \e^{2\gamma(sr)R}V\|_{\infty})$.
\end{proof}

\appendix

\section{Proof of the structure formula}\label{Appendix Proof of the structure formula}
We give a proof of \eqref{structure formula}, \eqref{al bound structure formula}.

\begin{proof}
1. Integration by parts yields
\begin{align*}
\mathcal{E}(\lambda)(I-\epsilon^2\Delta_S)^l g(x)=\int_S [(I-\epsilon^2\Delta_S)^l \e^{\I \lambda x\cdot\xi}]g(\xi)\rd S(\xi),
\end{align*}
which shows that
\begin{align}
\mathcal{E}(\lambda)(I-\epsilon^2\Delta_S)^l=\mathcal{E}_{a_l}(\lambda),\quad \mbox{with}\quad a_l(x,\xi)=\e^{-\I \lambda x\cdot\xi}(I-\epsilon^2\Delta_S)^l \e^{\I \lambda x\cdot\xi}.  
\end{align}
We will prove the bound \eqref{al bound structure formula} for $N=0$, the other cases being similar. Let $v\in\C^d$ and let $\phi_v(\xi):=v\cdot\xi/|\xi|$, $\xi\in \R^d\setminus\{0\}$, so that $\e^{\phi_v(\xi)}$ is the degree zero homogeneous extension of $\e^{\I \lambda x\cdot\xi}$ for  $v=\I\lambda x$. By a well-known formula for the spherical Laplacian,
\begin{align*}
\Delta_S \,\e^{\I \lambda x\cdot\xi}=\Delta \e^{\phi_v(\xi)},\quad \xi\in S,
\end{align*}
where $\Delta=\sum_{j=1}^{d}\partial_{\xi_j}$ is the Euclidean Laplacian. It thus suffices to prove that 
\begin{align}
    |\e^{-\phi_v(\xi)}(I-\epsilon^2\Delta)^l \e^{\phi_v(\xi)}|\leq C^{2l}(2l)!\exp(\epsilon v),\quad v\in\C^d,\quad \xi\in \R^d\setminus\{0\}.
\end{align}

2. By the multinomial formula, the left hand side of the last display consists of $(d+1)^l$ terms of the form $\e^{-\phi_v(\xi)}(\epsilon\partial)^{\alpha}\e^{\phi_v(\xi)}$ with $|\alpha|\leq 2l$. Note that the factor $(d+1)^l$ can be absorbed into the constant in \eqref{al bound structure formula}. Hence, we have reduced the task to proving that for $|\alpha|\leq 2l$, the function
\begin{align}\label{partial ephiv}
a_{\alpha}(v,\xi)=\e^{-\phi_v(\xi)}\partial^{\alpha} \e^{\phi_v(\xi)}
\end{align}
satisfies 
\begin{align}
    |\epsilon^{|\alpha|}a_{\alpha}(v,\xi)|\leq C^{|\alpha|}|\alpha|!\exp(\epsilon v),\quad v\in\C^d,\quad \xi\in \R^d\setminus\{0\}.
    \end{align}
Since this is a local statement, it suffices to prove it for some fixed $\xi\in S$. Since the upper bound in \eqref{al bound structure formula} is invariant under rotations, we may assume without loss of generality that $\xi=e_1$. 

3. It follows from the multivariate {F}a\`a di {B}runo formula (see \cite[Corollary 2.10]{MR1325915} for details) that
\begin{align}\label{Faa di Bruno formula}
a_{\alpha}(v,\xi)=\sum_{r=1}^{|\alpha|}\sum_{p(\alpha,r)}\alpha!\prod_{j=1}^{|\alpha|}\frac{[\partial^{\lambda_j}\phi_v(\xi)]^{k_j}}{(k_j!)(\lambda_j!)^{k_j}}
\end{align}
where $p(\alpha,r)$ is a set of $(k_1,\ldots,k_{|\alpha|};\lambda_1,\ldots,\lambda_{|\alpha|})$, $k_i\in\N_0$, $\lambda_i\in \N_0^{d}$, satisfying the constraints
\begin{align}\label{constraints Faa di Bruno formula}
\sum_{i=1}^{|\alpha|}k_i=r,\quad \sum_{i=1}^{|\alpha|}k_i\lambda_i=\alpha,
\end{align}
 Since $\phi_v$ is homogeneous of degree one in $v$ and since $\epsilon\leq 1$, it follows from \eqref{Faa di Bruno formula} and the first identity in \eqref{constraints Faa di Bruno formula} that, with $\omega:=v/|v|$, 
\begin{align*}
|\epsilon^{|\alpha|}a_{\alpha}(v,\xi)|&=\left|\sum_{r=1}^{|\alpha|}|\epsilon v|^r\sum_{p(\alpha,r)}\alpha!\prod_{j=1}^{|\alpha|}\frac{[\epsilon\partial^{\lambda_j}\phi_{\omega}(\xi)]^{k_j}}{(k_j!)(\lambda_j!)^{k_j}}\right|\\
&\leq \exp(|\epsilon v|)\sum_{p(\alpha,r)}r!\alpha!\prod_{j=1}^{|\alpha|}\frac{|\partial^{\lambda_j}\phi_{\omega}(\xi)|^{k_j}}{(k_j!)(\lambda_j!)^{k_j}}.
\end{align*}

4. It remains to show that
\begin{align}\label{combinatorial bound aalpha}
\sum_{p(\alpha,r)}r!\alpha!\prod_{j=1}^{|\alpha|}\frac{|\partial^{\lambda_j}\phi_{\omega}(\xi)|^{k_j}}{(k_j!)(\lambda_j!)^{k_j}}\leq C^{|\alpha|+1}\alpha!.
\end{align}
Here and in the following, $C$ denotes a generic constant (only depending on the dimension $d$) that is allows to change from line to line. 
To prove this we use that $\phi_{\omega}(\xi)$ is real-analytic at $\xi=e_1$, which means that we have derivative bounds
\begin{align}\label{real-analyticity of phi}
|\partial_{\lambda}\phi_{\omega}(\xi)|\leq C^{|\lambda|+1}\lambda!
\end{align} 
for $\xi$ in an open ball around $e_1$. Since $\omega$ ranges over the unit sphere, the constant can be taken uniform in $\omega$. Using \eqref{real-analyticity of phi} in \eqref{combinatorial bound aalpha} together with the second identity in \eqref{constraints Faa di Bruno formula} yields
\begin{align*}
\sum_{p(\alpha,r)}r!\alpha!\prod_{j=1}^{|\alpha|}\frac{|\partial^{\lambda_j}\phi_{\omega}(\xi)|^{k_j}}{(k_j!)(\lambda_j!)^{k_j}}\leq C^{|\alpha|+1}\alpha!r!\sum_{p(\alpha,r)}\frac{1}{\prod_{j=1}^{|\alpha|}k_j!}.
\end{align*}
The last term can be evaluated to (see \cite[page 515]{MR1325915})
\begin{align*}
r!\sum_{p(\alpha,r)}\frac{1}{\prod_{j=1}^{|\alpha|}k_j!}=\begin{pmatrix}
|\alpha|-1\\ r-1
\end{pmatrix},
\end{align*}
and we estimate
\begin{align*}
\begin{pmatrix}
|\alpha|-1\\ r-1
\end{pmatrix}
\leq \sum_{r=2}^{|\alpha|}\begin{pmatrix}
|\alpha|-1\\ r-1
\end{pmatrix}=2^{|\alpha|-1}.
\end{align*}
This proves \eqref{combinatorial bound aalpha}.
\end{proof}

\section{Details of the proof of Theorem 7.1}\label{appendix Details of the proof of Theorem 7.1}

\subsection{Details for part (a)}
We fist recall the Birman--Schwinger principle: If $\im\lambda\in\C$ is a resonance of $-\Delta+V$, then
\begin{align*}
    1\leq \|BS(\lambda)\|=s_1(BS(\lambda)).
\end{align*}
\subsubsection{Lorentz case:} We start with the easier bound \eqref{eigenvalue bounds, i.e. Im lambda>0} for $\im\lambda\geq 0$. By \eqref{singular values Lp im lm>0} with $k=1$, we have
\begin{align*}
    1\leq \|BS(\lambda)\|\lesssim |\lambda|^{-\frac{2}{d+1}}\|V\|_{(d+1)/2}\implies |\lambda|\lesssim \|V\|_{(d+1)/2}^{(d+1)/2}
\end{align*}
For $\im\lambda< 0$, we use \eqref{singular values Lp im lm <0 (used for Schatten membership)} with $k=1$,
\begin{align*}
 \|BS(\lambda)\|\lesssim \delta^{-\frac{(d-1)^2}{d+1}}  |\lambda|^{-\frac{2}{d+1}}\|\e^{2\sqrt{1+\delta}|\im\lambda||\cdot|}V\|_{(d+1)/2,1/2},
\end{align*}
which leads to the stated bound in Theorem \ref{thm. main detailed} (a) in the Lorentz case. Note that the bound there is stated for all $\lambda\in\C$, which is still true since the bound for $\im\lambda\geq 0$ is clearly dominated by that for $\im\lambda<0$. This remark also applies to the compactly supported and pointwise decaying case, and we will not repeat it.

\subsubsection{Compactly supported case:} For $\im\lambda\geq 0$, we use \eqref{cpt. singular values im lm >0} with $k=1$:
\begin{align*}
    \|BS(\lambda)\|\lesssim (\langle \lambda R\rangle^{\nu}\ln \langle \lambda R\rangle R|\lambda|^{-1}+R^{2\theta}|\lambda|^{-2(1-\theta)})\|V\|_{\infty}.
\end{align*}
Choosing $\theta=\frac{1}{2},\nu=0$, the first term dominates the second, and we get
\begin{align}\label{Appendix details compact Im lm>0}
    \|BS(\lambda)\|\lesssim\ln \langle \lambda R\rangle R|\lambda|^{-1}\|V\|_{\infty}.
\end{align}
Since $\ln \langle \lambda R\rangle\lesssim_{\kappa}\langle \lambda R\rangle^{\kappa}$ for any $\kappa>0$, we get the stated bound in \eqref{eigenvalue bounds, i.e. Im lambda>0}.

For $\im\lambda< 0$, we use \eqref{cpt. singular values im lm<0} with $k=1$:
\begin{align*}
    \|BS(\lambda)-BS(-\lambda)\|\lesssim \delta^{1-d}  \langle \lambda R\rangle^{\nu}\ln \langle \lambda R\rangle R|\lambda|^{-1}\|\rho_{\delta}^{-2}V\|_{\infty}.
\end{align*}
Since the upper bound \eqref{Appendix details compact Im lm>0} for $\im\lambda\geq 0$ is dominated by the right hand side in the last display, an application of the triangle inequality for the operator norm $\|\cdot\|$ yields the same upper bound for $\|BS(\lambda)\|$. This gives the stated bound in Theorem \ref{thm. main detailed} (a) in the compactly supported case.

\subsubsection{Pointwise decaying case:} For $\im\lambda\geq 0$, we use \eqref{ptw. singular values im lm >0} with $k=1$:
\begin{align}
\|BS(\lambda)\|\lesssim (\langle \lambda R\rangle^{\nu+\kappa}R|\lambda|^{-1}+R^{2\theta}|\lambda|^{-2(1-\theta)})v_{\rho,R}.
\end{align}
Again, choosing $\theta=\frac{1}{2},\nu=0$ and observing that the first term dominates the second, we get
\begin{align}
    \|BS(\lambda)\|\lesssim \langle \lambda R\rangle^{\kappa} R|\lambda|^{-1}v_{\rho,R},
\end{align}
which gives the stated bound in \eqref{eigenvalue bounds, i.e. Im lambda>0}.

For $\im\lambda< 0$, we use \eqref{ptw. singular values im lm <0 (needed for Schatten membership)} with $k=1$:
\begin{align}
\|BS(\lambda)\|
\lesssim \delta^{1-d} \langle \lambda R\rangle^{\nu+\kappa}R|\lambda|^{-1}v_{\rho,R,\sqrt{1+\delta}|\im\lambda|}+R^{2\theta}|\lambda|^{-2(1-\theta)}v_{\rho,R},
\end{align}
Choosing $\theta=\frac{1}{2},\nu=0$, the first term again dominates the second, and we get
\begin{align*}
   \|BS(\lambda)\|
\lesssim \delta^{1-d} \langle \lambda R\rangle^{\kappa}R|\lambda|^{-1}v_{\rho,R,\sqrt{1+\delta}|\im\lambda|} 
\end{align*}
This gives the stated bound in Theorem \ref{thm. main detailed} (a) in the pointwise decaying case.

\subsection{Details for part (b)}
Recall \eqref{conclusion sect. 6-1}, \eqref{conclusion sect. 6-2}:
\begin{align}
    n_V(r)\lesssim (\ln s)^{-1}(\tilde{n}_+(sr)+\tilde{n}_-(sr)+\tilde{n}_0),
\end{align}
\begin{align}
        \tilde{n}_{\pm}(r):=\max_{\lambda\in S_{\pm}(r,\lambda_0)} \sum_{k\in\N}\ln(1+[s_k(BS(\lambda))]^{\alpha}),\quad
         \tilde{n}_0:=\sum_{k\in\N}\ln(1+[s_k(BS(\lambda_0))]^{\alpha}),
\end{align}
where we omitted the dependence of the implicit constant on the (fixed) integer $\alpha>d/2$. We will show that, if $n_{\pm}(r)$, $n_0$ denote the quantities appearing in the statement of Theorem \ref{thm. main detailed}, then
\begin{align}
   \tilde{n}_{+}(r)\lesssim n_+(r), \quad \tilde{n}_{-}(r)\lesssim n_+(r)+n_-(r),\quad \tilde{n_0}\lesssim n_0.
\end{align}
\subsubsection{Lorentz case:} Using \eqref{singular values Lp im lm>0} and the fact that $x\mapsto \ln(1+x^{\alpha})$ is monotonically increasing on $(0,\infty)$, we get
\begin{align}
 \tilde{n}_{+}(r)\lesssim \max_{\lambda\in S_{+}(r,\lambda_0)}\sum_k \ln(1+[k^{-\frac{1}{d+1}} |\lambda|^{-\frac{2}{d+1}}\|V\|_{(d+1)/2}]^{\alpha}). 
\end{align}
Lemma \ref{lemma sum ln with M} i) applied with $M=|\lambda|^{-\frac{2}{d+1}}\|V\|_{(d+1)/2}$ and $\beta=d+1$ yields
\begin{align*}
 \tilde{n}_{+}(r)\lesssim \max_{\lambda\in S_{+}(r,\lambda_0)} |\lambda|^{-2}\|V\|_{(d+1)/2}^{(d+1)/2}\leq r^{-2}\|V\|_{(d+1)/2}^{(d+1)/2}=n_+(r),
\end{align*}
where, in the last inequality, we used that $|\lambda|\geq r$ for $\lambda\in S_{+}(r,\lambda_0)$.

To estimate $\tilde{n}_{+}(r)$ we first sparsify the sum over $k\in\N$, by splitting it into a sum over even and odd $k$ (a similar argument was used in Section 6). By monotonicity of the singular values, it suffices to consider one of these, say the even ones. Using $s_{2j+1}(A+B)\leq s_{j+1}(A)+s_{j+1}(B)$, which follows from \eqref{s_j(AB)}, we get
\begin{align}\label{first sum}
 \sum_{k\in 2\N}\ln(1+[s_k(BS(\lambda))]^{\alpha})\lesssim \sum_{k\in N}\ln(1+[2s_k(BS(-\lambda))]^{\alpha})\\+\sum_{k\in N}\ln(1+[2s_k((BS(\lambda)-BS(-\lambda)))]^{\alpha}).
\end{align}
Here we again used the monotonicity of $x\mapsto \ln(1+x^{\alpha})$, as well as its consequence
\begin{align*}
\ln(1+(x+y)^{\alpha})&\leq \ln(1+2^{\alpha}x^{\alpha}+2^{\alpha}y^{\alpha})
\leq \ln((1+2^{\alpha}x^{\alpha})(1+2^{\alpha}y^{\alpha}))\\
&=\ln(1+2^{\alpha}x^{\alpha})+\ln(1+2^{\alpha}y^{\alpha}).
\end{align*}
Taking the maximum over $\lambda\in S_{+}(r,\lambda_0)$, the first sum on the right hand side of \eqref{first sum} is estimated by $n_+(r)$, so it remains to show that the second sum is dominated by $n_-(r)$. By
\eqref{singular values Lp exponential}, for $\lambda\in S_{+}(r,\lambda_0)$, this sum is bounded by
\begin{align}
\sum_{k\in N}\ln(1+[2^{\alpha}|\lambda|^{-\frac{2}{d+1}}\delta^{-\frac{(d-1)^2}{d+1}}\exp\left(-c\epsilon k^{\frac{1}{d-1}}\right)\|\e^{2\gamma(r)|\cdot|}V\|_{(d+1)/2,1/2}]^{\alpha}).
\end{align}
Lemma \ref{lemma sum ln with M} ii), with $M=2^{\alpha}|\lambda|^{-\frac{2}{d+1}}\delta^{-\frac{(d-1)^2}{d+1}}\|\e^{2\gamma(r)|\cdot|}V\|_{(d+1)/2,1/2}$ and $\beta=d-1$, shows that this is dominated by $n_-(r)$.

The bound for $\tilde{n}_0$ is similar to that for $\tilde{n}_+$. We again use \eqref{singular values Lp im lm>0}, but this time with $\lambda_0$ instead of $\lambda$, resulting in
\begin{align}
 \tilde{n}_0\lesssim\sum_k \ln(1+[k^{-\frac{1}{d+1}} (A/2))^{-\frac{2}{d+1}}]^{\alpha}), 
\end{align}
where we used the definition of $\lambda_0$, see \eqref{def lambda0}.
Lemma \ref{lemma sum ln with M} i) applied with $M=(A/2))^{-\frac{2}{d+1}}\|V\|_{(d+1)/2}$ and $\beta=d+1$ yields $\tilde{n}_0\lesssim A^{-2}=n_0$.

\subsubsection{Compactly supported and decaying case}
The argument is exactly the same as for the Lorentz case, with the appropriate replacement of the singular value bounds (the relevant bounds are cited in the proof of Theorem \ref{thm. main detailed}). The only difference is the appearance of a factor $\langle \lambda R\rangle^{\nu+\kappa}$ with a \emph{positive} power of $\lambda$. But for $\lambda\in S_{\pm}(r,\lambda_0)$, we have a prior only a lower bound $|\lambda|\geq r$, so it is not immediately clear how to estimate the maximum over such $\lambda$. This factor appears in the expression $\langle \lambda R\rangle^{\nu+\kappa}R|\lambda|^{-1}\|V\|_{\infty}$ (or $v_{\rho,R}$). If $\nu+\kappa\leq 1$, then we can estimate
\begin{align*}
    \langle \lambda R\rangle^{\nu+\kappa}R|\lambda|^{-1}
    \lesssim \langle rR\rangle^{\nu+\kappa}Rr^{-1}
\end{align*}
for $|\lambda|\geq r$. Indeed, if $|\lambda|R\leq 2$, then this clearly holds. If $|\lambda|R>2$, then the left hand side is bounded by $|\lambda|^{\nu+\kappa-1} R^{\nu+\kappa+1}$. Since $\nu+\kappa\leq 1$, the claim follows. However, if $\nu+\kappa> 1$, then the lower bound $|\lambda|\geq r$ is not enough, and we need to use the stronger fact that $\lambda$ lies in the intersection of the upper half plane with $\partial D(\lambda_0,|\lambda_0|+r)$, which implies that $|\lambda|\leq 2|\lambda_0|+r$ (see Figure 1 in Section \ref{sect. Fredholm det.}). This leads to the bound for $n_+(r)$ stated in Theorem \ref{thm. main detailed}.

\section{Proof of Remark \ref{remark literature results} and Example \ref{example sparse}}\label{appendix examples}
\subsection{Proof of Remark \ref{remark literature results}}
(i) Assume that $V\in L^{\infty}_{\rm comp}(\R^d)$, $\supp(V)\subset B(0,R)$. It suffices to show that the right hand sides of \eqref{main thm n_V Lp}, \eqref{main thm n_V cpt}, \eqref{main thm n_V ptw} are all bounded by $C_V r^d$ for $r\geq 1$. This is obvious for \eqref{main thm n_V cpt}, \eqref{main thm n_V ptw}. 
To estimate the Lorentz norm in \eqref{main thm n_V Lp} we use 
\begin{align*}
|\{x:e^{2\gamma|x|}|V(x)|>\alpha\}|\leq |B(0,R)|\mathbf{1}\{\alpha<\e^{2\gamma R}\|V\|_{\infty}\}
\end{align*}
to find that (with $\gamma=1+\epsilon$)
\begin{align*}
\|\e^{2(1+\epsilon)r|\cdot|}V\|_{(d+1)/2,1/2}\lesssim \left(\int_0^{\infty}\alpha^{-\frac{1}{2}}|\{x:e^{2\gamma|x|}|V(x)|>\alpha\}|^{\frac{1}{d+1}}\rd\alpha\right)^{2}\lesssim R^{\frac{2d}{d+1}}e^{2\gamma R}\|V\|_{\infty}.
\end{align*}

(ii) Without loss of generality we assume that $|V(x)|\leq \exp(-|x|^{1+\epsilon})$. We show that the right hand sides of \eqref{main thm n_V Lp}, \eqref{main thm n_V ptw} are bounded by $C_Vr^{d(1+1/\epsilon)}$ for $r\geq 1$.
In the pointwise decaying case \eqref{main thm n_V ptw} this follows from the fact that
\begin{align}\label{example 2 claim}
|V(x)|\leq C_{\rho,\epsilon}(1+|x|)^{-\rho}\exp(4^{1/\epsilon}\gamma^{1+1/\epsilon})\e^{-2\gamma|x|}
\end{align}
for every $\rho,\gamma>0$. Indeed, if $|x|^{\epsilon}\geq 4\gamma$, then 
\begin{align*}
|V(x)|\leq \e^{-\frac{1}{2}|x|^{1+\epsilon}}\e^{-2\gamma|x|}\leq C_{\rho,\epsilon}(1+|x|)^{-\rho} \e^{-2\gamma|x|},
\end{align*}
and if $|x|^{\epsilon}< 4\gamma$, then $\e^{-\gamma|x|}>\exp(-4^{1/\epsilon}\gamma^{1+1/\epsilon})$, so that 
\begin{align*}
|V(x)|\leq C_{\rho,\epsilon}(1+|x|)^{-\rho} <C_{\rho,\epsilon}(1+|x|)^{-\rho}\exp(4^{1/\epsilon}\gamma^{1+1/\epsilon})\e^{-2\gamma|x|}.
\end{align*}
In the Lorentz case \eqref{main thm n_V Lp}, let
\begin{align*}
U_{\alpha}:=\{x:e^{2\gamma|x|}|V(x)|>\alpha\},\quad U_{\alpha,<}:=U_{\alpha}\cap B(0,(4\gamma)^{\frac{1}{\epsilon}}).
\end{align*}
Then for $x\in U_{\alpha}\setminus U_{\alpha,<}$, we have $2\gamma|x|-|x|^{1+\epsilon}\leq -\frac{1}{2}|x|^{1+\epsilon}$, which implies
\begin{align*}
| U_{\alpha}\setminus U_{\alpha,<}|\lesssim [\ln(1/\alpha)]^{\frac{d}{1+\epsilon}}\mathbf{1}_{\alpha<1}.
\end{align*}
On the other hand, we have
\begin{align*}
|U_{\alpha,<}|\lesssim \gamma^{\frac{d}{\epsilon}}\mathbf{1}_{\alpha<\alpha_{\rm max}},
\end{align*}
where $\alpha_{\rm max}=\max_{x\in\R^d}e^{2\gamma|x|}|V(x)|\leq \exp(C_{\epsilon}\gamma^{1+1/\epsilon})$, in view of \eqref{example 2 claim}. Combining the last two displays, we have
\begin{align*}
&\|\e^{2(1+\epsilon)r|\cdot|}V\|_{(d+1)/2,1/2}\lesssim\left(\int_0^{\infty}\alpha^{-\frac{1}{2}}|U_{\alpha}|^{\frac{1}{d+1}}\rd\alpha\right)^{2}\\
&\lesssim \left(\int_0^1 \alpha^{-\frac{1}{2}}[\ln(1/\alpha)]^{\frac{d}{(1+\epsilon)(d+1)}}\rd\alpha\right)^{2}+\gamma^{\frac{d}{\epsilon}}\alpha_{\rm max}\leq \exp(C_{\epsilon}\gamma^{1+1/\epsilon}),
\end{align*}
where we used that the integral involving the logarithm is $\mathcal{O}(1)$. 

(iii) Assume again that $V\in L^{\infty}_{\rm comp}(\R^d)$, $\supp(V)\subset B(0,R)$. Without loss of generality we may assume that $R>2$ and $\|V\|_{\infty}\leq C_2^{-2}$. Since $n_V(r,h)=n_{V/h^2}(r/h)$, it is easy to see that \eqref{main thm n_V cpt} implies $n_V(r,h)\lesssim [rh^{-1}R]^d$ for $r\geq 1$, $h\in (0,1]$. 
The same argument works for \eqref{main thm n_V ptw}. It does not work for \eqref{main thm n_V Lp} since the condition $r\geq C_2 \|V\|_{(d+1)/2}^{(d+1)/2}$ is not invariant under rescaling $r\to r/h$, $V\to V/h^2$.

\subsection{Details of Example \ref{example sparse}}
Since $V$ is sparse, we have $L_j(1+o(1))\geq |x|$ for $x\in\Omega_j$ \cite[(16)]{MR4426735}.
Therefore, $$\|\e^{2\gamma|\cdot|}V\|_{(d+1)/2,1/2}\leq\|\sum_{j}\e^{\mathcal{O}(1)\gamma L_j}|H_j|\mathbf{1}_{\Omega_j}\|_{(d+1)/2,1/2}.$$ We assume $|H_j|=1$, $L_j=j$, $|\Omega_j|\leq C_M\exp(-Mj)$ for all $M>0$. Then
\begin{align*}
|\{x:\sum_{j}\e^{\mathcal{O}(1)\gamma L_j}|H_j|\mathbf{1}_{\Omega_j}(x)>\alpha\}|\leq \sum_{j>c\ln\alpha/\gamma}|\Omega_j|\lesssim C_M\alpha^{-cM/\gamma}
\end{align*}
for all $\alpha>1$, some $c>0$, and zero else. Thus,
\begin{align*}
\|\e^{2\gamma|\cdot|}V\|_{(d+1)/2,1/2}\lesssim \int_1^{\infty}(C_M\alpha^{-cM/\gamma})^{\frac{2}{d+1}}\rd\alpha
\lesssim C_M^{\frac{2}{d+1}}(c'M/\gamma-1)^{-1}
\end{align*}
for $\gamma<c'M$ and some $c'>0$.
Thus, \eqref{main thm n_V Lp} yields 
\begin{align*}
    n_V(r)\lesssim \left[\ln\left(\mathcal{O}(1)(C_M/r)^{\frac{2}{d+1}}\frac{r}{M-\mathcal{O}(1)r)}\right)\right]^d
\end{align*}
for $1\lesssim r\ll M$. Since $M$ can be taken arbitrarily large, it follows that $n_V(r)<\infty$ for all $r>0$.

\bibliographystyle{alpha}

\end{document}